\theoremstyle{plain}
\newtheorem{thm}{Theorem}[section]
\newtheorem{lemma}[thm]{Lemma}
\newtheorem{conj}[thm]{Conjecture}
\newtheorem{prop}[thm]{Proposition}
\theoremstyle{definition}
\newtheorem{rmk}[thm]{Remark}
\newtheorem{example}[thm]{Example}
\newcommand{\frg}{\mathfrak{g}}
\newcommand{\frh}{\mathfrak{h}}
\newcommand{\bbC}{\mathbb{C}}
\newcommand{\bbN}{\mathbb{N}}
\newcommand{\bbZ}{\mathbb{Z}}
\begin{document}

\title[Orbits of antichains]
{Orbits of antichains in certain root posets}

\author{Chao-Ping Dong}

\address[Dong]
{Institute of Mathematics,  Hunan University,
Changsha 410082, China}
\email{chaoping@hnu.edu.cn}

\author{Suijie Wang}
\address[Wang]
{Institute of Mathematics,  Hunan University,
Changsha 410082, China}
\email{wangsuijie@hnu.edu.cn}
\thanks{The research is supported by NSFC grant 11571097, 11401196, and  the China Scholarship Council.}

\abstract{This paper gives another proof of Propp and Roby's theorem saying that the average antichain size in any reverse operator orbit of the poset $[m]\times [n]$ is $\frac{mn}{m+n}$. It is conceivable that our method should work for other situations. As a demonstration, we show that the average size of antichains in any reverse operator orbit of $[m]\times K_{n-1}$  equals $\frac{2mn}{m+2n-1}$. Here $K_{n-1}$ is the minuscule poset $[n-1]\oplus ([1] \sqcup [1]) \oplus [n-1]$. Note that $[m]\times [n]$ and $[m]\times K_{n-1}$ can be  interpreted as sub-families of certain root posets. We guess these root posets should provide a unified setting to exhibit the homomesy phenomenon defined by Propp and Roby.}
\endabstract

\subjclass[2010]{Primary 06A07}

\keywords{Antichain;  homomesy; join-separate rule; reverse operator; root posets.}

\maketitle


\section{Introduction}

Recently, the study of orbit structure of the reverse operator, which is also called rowmotion \cite{SW} or Fon-Der-Flaass action \cite{RS} in the literature, has emerged at the front lines of inquiry.
This paper introduces a method that aims to calculate the sizes of antichains in each orbit. Our method turns out to be effective in dealing with certain root posets coming from the classical Lie algebras.

Now let us be more precise. A subset $I$ of a finite poset $(P, \leq)$ is called an \emph{ideal} if $x\leq y$ in $P$ and $y\in I$  implies that $x\in I$.  Let $J(P)$ be the set of ideals of $P$, partially ordered by inclusion. A subset $A$ of $P$ is an \emph{antichain} if its elements are mutually incomparable. We
collect the antichains of $P$ as $\mathrm{An}(P)$. For any $x\in P$,
let $I_{\leq x}=\{y\in P\mid y\leq x\}$. Given an ideal $I$ of $P$, set $\Gamma(I):=\max(I)$. For an antichain $A$ of
$P$, let $I(A)=\bigcup_{a\in A} I_{\leq a}$. The \emph{reverse
operator} $\Psi$ is defined by $\Psi(A)=\min
(P\setminus I(A))$. This operator was introduced on hypergraphs by Duchet in 1974 \cite{D}, and studied on $[m]\times [n]$---the product of two chains---by Fon-Der-Flaass in 1993 \cite{F}.
Since antichains of $P$ are in bijection with
 ideals of $P$, the reverse operator acts on
 ideals of $P$ as well. For convenience, we may just call an $\Psi_{P}$-orbit of antichains or ideals  an \emph{orbit}.

Recently, the following result has been established in \cite{PR}.

\medskip
\noindent\textbf{Theorem A.} \textbf{(Propp and Roby)}
\emph{The average size of antichains in any $\Psi$-orbit
of $[m]\times [n]$ equals $\frac{mn}{m+n}$.}
\medskip

Later,  Theorem A has been extended uniformly to all minuscule posets by Rush and Wang \cite{RW}. Recall that as been classified by Proctor \cite{Pr}, the minuscule posets include two exceptions $J^2([2]\times [3])$, $J^3([2]\times [3])$, and the following three infinite families:
\begin{itemize}
\item[$\bullet$] $[m]\times [n]$;
\item[$\bullet$] $H_n:=[n]\times [n]/S_2$;
\item[$\bullet$] $K_r:=[r]\oplus ([1] \sqcup [1]) \oplus [r]$ (the ordinal sum, see page 246 of Stanley \cite{St}).
\end{itemize}

The first aim of this paper is to give another proof of Theorem A. It is conceivable that the method developed in Sections 3 and 4 can be applied to more general situations. As a demonstration, we report the following.

\medskip
\noindent\textbf{Theorem B.}
\emph{The average size of antichains in any $\Psi$-orbit of $[m]\times K_{n-1}$  equals $\frac{2mn}{m+2n-1}$.}
\medskip

Theorem B follows from Propositions \ref{prop-D-I} and \ref{prop-D-II}. In Section 7, we will recall certain root posets $\Delta(1)$. They consists of the infinite families $[m]\times [n]$, $H_n$, $[m]\times K_{n-1}$, and twenty other posets.  By Theorems A and B, and by checking the twenty posets directly, one knows that the average size of antichains in any $\Psi$-orbit of $\Delta(1)$  equals $\frac{\#\Delta(1)}{d+1}$, where $d$ is the maximum height of the roots in $\Delta(1)$.

The paper is outlined as follows: Section 2 prepares necessary preliminaries. Section 3 aims to prove Theorem A via another method, where the proof of a key lemma is postponed to Section 4. Sections 5 and 6 are devoted to proving Theorem B. More precisely, they handle orbits in $[m]\times K_{n-1}$ of type I and II, respectively. Section 7 recalls the root posets $\Delta(1)$, and raises two conjectures pertaining to their orbit structure. Our conjecures simply aim to guess that $\Delta(1)$ would provide a unified setting to exhibit the homomesy phenomenon invented by Propp and Roby.

\section{Preliminary results}
Throughout this paper, we let $\mathbb{N}=\{0, 1, 2, \dots\}$, and
$\mathbb{P}=\{1, 2, \dots\}$. For each $k\in\mathbb{P}$, the poset
$[k]:=\{1,2, \dots, k\}$ is equipped with the order-reversing involution $c$ such that
$c(i)=k+1-i$. For  $s, t\in\mathbb{P}$ such that $s<t$, $[s, t]:=\{s, s+1, \dots, t\}$.

As on page 244 of Stanley \cite{St}, a finite poset $P$ is said to be
\emph{graded} if every maximal chain in $P$ has the same length. In
this case, there is a unique rank function $r$ from $P$ to the
positive integers $\mathbb{P}$ such that all the minimal elements
have rank $1$, and $r(x)=r(y)+1$ if $x$ covers $y$.
We always assume that the finite poset $P$ is graded.
Let $d$ be the maximum value of $r$ on $P$. For $1\leq j \leq d$, let $P_j$ be the $j$-th rank level of $P$. That is, $P_j=\{p\in P \mid r(p)=j\}$. Set $P_0$ to be the empty set. Note that each $P_j$ is an antichain. Put $L_i=\bigsqcup_{j=1}^{i} P_j$ for $1\leq i\leq d$, and let $L_0$
be the empty set. We call those $L_i$ \emph{rank ideals}. Recall that the reverse operator
acts on ideals as well. For instance,
$\Psi_P(L_i)=L_{i+1}$, $0\leq i<d$ and $\Psi_P(L_d)=L_{0}$.

It is well known that a general
ideal of $[m]\times P$ can be identified with $(I_1, \dots, I_m)$, where each
$I_i$ is an ideal of $P$ and $I_m\subseteq  \cdots \subseteq I_{1}$. We say
that the ideal $(I_1, \dots, I_m)$ is \emph{full rank} if
each $I_i$ is a rank ideal in $P$.  Let $\mathcal{O}(I_1, \dots, I_m)$ be
the $\Psi_{[m]\times P}$-orbit of $(I_1, \dots, I_m)$. The following lemma is taken from Section 2 of \cite{DW}. One may also verify it directly from the definition of $\Psi$.

\begin{lemma}\label{lemma-operator-ideals-CmP}
Keep the notation as above.  Then
for any $n_0\in \bbN$, $n_i\in\mathbb{P}$ ($1\leq i\leq s$) such that $\sum_{i=0}^{s} n_i =m$, we have
\begin{equation}\label{rank-level}
\Psi_{[m]\times P}(L_d^{n_0}, L_{i_1}^{n_1}, \dots, L_{i_s}^{n_s})=
(L_{i_1+1}^{n_0+1}, L_{i_2+1}^{n_1}, \dots, L_{i_s+1}^{n_{s-1}}, L_0^{n_s-1}),
\end{equation}
where $0\leq i_s<\cdots <i_1<d$, $L_d^{n_0}$ denotes $n_0$ copies of $L_d$ and so on.
\end{lemma}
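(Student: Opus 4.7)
The plan is to compute $\Psi_{[m]\times P}(I)$ directly, where $I=(L_d^{n_0},L_{i_1}^{n_1},\ldots,L_{i_s}^{n_s})$, by exploiting the fact that on ideals the reverse operator sends $I$ to the ideal generated by the antichain $\min\bigl(([m]\times P)\setminus I\bigr)$. Introduce the partial sums $N_k:=n_0+n_1+\cdots+n_{k-1}$ for $0\leq k\leq s+1$, so that the $k$-th block of constant slice is $i\in[N_k+1,N_{k+1}]$ with $I_i=L_{i_k}$ (with $I_i=L_d$ on the initial block $[1,N_1]$).

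First I will locate the minimal elements of the complement. Since the covers of $(i,p)$ in $[m]\times P$ are $(i-1,p)$ (when $i>1$) together with $(i,q)$ for $q$ a cover of $p$ in $P$, one has $(i,p)\in\min(([m]\times P)\setminus I)$ iff $p\in\min(P\setminus I_i)$ and, when $i>1$, $p\in I_{i-1}$. Now $\min(P\setminus L_d)=\emptyset$ and $\min(P\setminus L_{i_k})=P_{i_k+1}$, so the initial block contributes nothing. Inside the $k$-th block ($k\geq 1$), the condition $p\in I_{i-1}=L_{i_k}$ excludes rank $i_k+1$ at every slice except the first; at the first slice $i=N_k+1$ one instead has $I_{i-1}\in\{L_d,L_{i_{k-1}}\}$ with $i_{k-1}>i_k$, so all of $P_{i_k+1}$ passes. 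Hence
\[
\min\bigl(([m]\times P)\setminus I\bigr)=\bigsqcup_{k=1}^{s}\bigl\{(N_k+1,p):p\in P_{i_k+1}\bigr\}.
\]

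The second step is to take the downward closure in $[m]\times P$ and read off the slices of $\Psi_{[m]\times P}(I)$. Since $P$ is graded, every element of rank at most $r$ extends along a maximal chain to one of rank exactly $r$; in particular the down-set in $P$ generated by $P_{i_k+1}$ equals $L_{i_k+1}$. Thus the $k$-th cluster of minima contributes $L_{i_k+1}$ to slice $j$ precisely when $j\leq N_k+1$. Using the nesting $L_{i_1+1}\supseteq L_{i_2+1}\supseteq\cdots\supseteq L_{i_s+1}$, each slice $\Psi_{[m]\times P}(I)_j$ collapses to the single $L_{i_{k^{\ast}}+1}$ with $k^{\ast}=\min\{k:N_k+1\geq j\}$, or to $L_0$ when no such $k$ exists. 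Partitioning $j\in[1,m]$ into $[1,N_1+1]$, $[N_1+2,N_2+1]$, $\ldots$, $[N_{s-1}+2,N_s+1]$, $[N_s+2,m]$, whose respective lengths are $n_0+1,n_1,\ldots,n_{s-1},n_s-1$, reproduces the right-hand side of \eqref{rank-level}. The only delicacy is the bookkeeping at block boundaries and in the degenerate cases $n_0=0$ or $n_s=1$, where one checks that the corresponding pieces shrink or vanish correctly; beyond this, no deeper obstruction is anticipated.
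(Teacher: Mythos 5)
Your computation is correct: identifying the minimal elements of the complement slice by slice (they sit at positions $N_k+1$ and form the rank levels $P_{i_k+1}$), then taking the order ideal they generate and counting the lengths of the resulting constant blocks, does verify \eqref{rank-level}, including the degenerate cases $n_0=0$ and $n_s=1$. The paper gives no proof of this lemma beyond citing \cite{DW} and remarking that it "may be verified directly from the definition of $\Psi$," and your argument is precisely that direct verification (the only quibble being that "covers of $(i,p)$" should read "elements covered by $(i,p)$").
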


In view of Lemma \ref{lemma-operator-ideals-CmP}, the ideal $(I_1, \dots, I_m)$ is full rank if and only if each ideal in its $\Psi_{[m]\times P}$-orbit is full rank.
Therefore, it makes sense to say that the orbit $\mathcal{O}(I_1, \dots, I_m)$
is of \emph{type I} if $(I_1, \cdots, I_m)$ is full rank, and to say it
is of \emph{type II} otherwise.

\begin{figure}[H]
\centering \scalebox{0.24}{\includegraphics{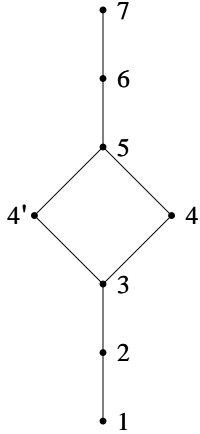}}
\caption{The labeled Hasse diagram of $K_3$}
\end{figure}

In this paper, we care most about the case that $P=K_{n-1}$, whose elements are labelled by $1$, $2$, $\dots$, $n-1$, $n$,
$n^{\prime}$, $n+1$, $\dots$, $2n-2$, $2n-1$. Fig.~1 illustrates
the labeling for $K_3$. Note that $L_i$ ($0\leq i\leq 2n-1$) are all
the full rank ideals. For instance, we have $L_{n}=\{1, 2,
\dots, n, n^{\prime}\}$. Moreover, we put $I_{n}=\{1,  \dots, n-1,
n\}$ and $I_{n^{\prime}}=\{1,  \dots, n-1, n^{\prime}\}$. The following lemma is taken from Section 2 of \cite{DW}. Again, it can be checked directly.

\begin{lemma}\label{lemma-operator-ideals-CmK}
Fix $n_0\in \bbN$, $n_i\in\mathbb{P}$ ($1\leq i\leq s$),
$m_j\in\mathbb{P}$ ($0\leq j\leq t$) such that $\sum_{i=0}^{s} n_i +
\sum_{j=0}^{t} m_j=m$. Take any $0\leq j_t< \cdots<j_1<n\leq
i_s<\cdots <i_1<2n-1$, we have
\begin{align*}
\Psi_{[m]\times K_{n-1}}&(L_{2n-1}^{n_0}, L_{i_1}^{n_1}, \dots, L_{i_s}^{n_s}, I_n^{m_0}, L_{j_1}^{m_1}, \dots, L_{j_t}^{m_t})=\\
&\begin{cases}
( L_{i_1+1}^{n_0+1}, L_{i_2+1}^{n_1}, \dots, L_{i_s+1}^{n_{s-1}}, I_{n^{\prime}}^{n_s}, L_{j_1+1}^{m_0},
L_{j_2+1}^{m_1}, \dots, L_{j_t+1}^{m_{t-1}}, L_0^{m_t-1} ) & \mbox { if } j_1 < n-1;\\
( L_{i_1+1}^{n_0+1}, L_{i_2+1}^{n_1}, \dots, L_{i_s+1}^{n_{s-1}}, L_{n}^{n_s}, I_n^{m_0},
\, \, \, \, L_{j_2+1}^{m_1}, \dots, L_{j_t+1}^{m_{t-1}}, L_0^{m_t-1} )& \mbox { if } j_1 = n-1.
\end{cases}
\end{align*}
\end{lemma}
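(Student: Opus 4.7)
The plan is to apply the characterization $\Psi(J) = I(\min(([m]\times K_{n-1})\setminus J))$ directly to the ideal
$J = (L_{2n-1}^{n_0}, L_{i_1}^{n_1}, \dots, L_{i_s}^{n_s}, I_n^{m_0}, L_{j_1}^{m_1}, \dots, L_{j_t}^{m_t})$. Writing $I_k$ for the $k$-th component of $J$ and adopting the convention $I_0 := K_{n-1}$, the definition of the reverse operator unpacks to the statement that $(k,p)$ lies in $\min(([m]\times K_{n-1})\setminus J)$ if and only if $p \in \min(I_{k-1}\setminus I_k)$ in $K_{n-1}$. Since $I_k = I_{k-1}$ strictly inside each block of $J$, the minimal complement elements appear only at the block-boundary indices $k = a_j+1$ for $0\le j\le s$ and $k = b_r+1$ for $0\le r\le t-1$, where $a_j := n_0+n_1+\cdots+n_j$ and $b_r := a_s+m_0+m_1+\cdots+m_r$.

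Next I would compute $\min(I_{k-1}\setminus I_k)$ at each boundary and verify it is a singleton whose principal ideal has a familiar name. The upper-chain transitions $L_{2n-1}\to L_{i_1}$ and $L_{i_j}\to L_{i_{j+1}}$ each yield the unique element of rank $i_{j+1}+1$, generating $L_{i_{j+1}+1}$. The transition $L_{i_s}\to I_n$ yields $\{n'\}$, generating $I_{n'}$. The transitions $L_{j_r}\to L_{j_{r+1}}$ each yield $\{j_{r+1}+1\}$, generating $L_{j_{r+1}+1}$. Finally, the transition $I_n\to L_{j_1}$ yields $\{n\}$ when $j_1 = n-1$ and $\{j_1+1\}$ when $j_1 < n-1$, generating $I_n$ or $L_{j_1+1}$ respectively; this is the sole source of the dichotomy appearing in the statement.

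To conclude, I would assemble $\Psi(J)_k$, which equals the ideal in $K_{n-1}$ generated by the second coordinates of those minimal complement elements whose first coordinate is at least $k$. Letting $k$ decrease from $m$ down to $1$, a new generator is adjoined only when $k$ crosses a boundary, and each newly adjoined principal ideal either dominates or cleanly merges with the accumulated one. In the regime $j_1<n-1$, crossing $k = a_s+1$ enlarges $L_{j_1+1}$ to $L_{j_1+1}\cup I_{n'} = I_{n'}$; in the regime $j_1 = n-1$, the same crossing enlarges $I_n$ to $I_n\cup I_{n'} = L_n$. Reading off the constant values of $\Psi(J)_k$ across the inter-boundary intervals, whose lengths are precisely $n_0+1, n_1, \dots, n_{s-1}, n_s, m_0, m_1, \dots, m_{t-1}, m_t-1$, then reproduces the two cases of the stated formula.

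The main obstacle is the local bookkeeping around the two-element rank level $\{n, n'\}$: one must correctly identify $\min(L_{i_s}\setminus I_n) = \{n'\}$ (because $n'$ covers $n-1$ and is incomparable with $n$), $\min(I_n\setminus L_{n-1}) = \{n\}$, and $\min(I_n\setminus L_{j_1}) = \{j_1+1\}$ when $j_1 < n-1$. Away from this rank level, the calculation is a direct analogue of the telescoping already present in Lemma~\ref{lemma-operator-ideals-CmP}, so apart from handling the bifurcation correctly, no new idea is needed.
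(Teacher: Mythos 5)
Your proof is correct. The paper offers no argument for this lemma---it cites \cite{DW} and says the identity ``can be checked directly''---and your direct verification from the definition of $\Psi$ (reducing minimality of the complement in $[m]\times K_{n-1}$ to the sets $\min(I_{k-1}\setminus I_k)$ at the block boundaries, with the two-element rank level $\{n,n'\}$ producing the stated dichotomy via $\min(L_{i_s}\setminus I_n)=\{n'\}$ and $\min(I_n\setminus L_{j_1})$) is precisely that check, carried out correctly.
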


Keeping the notation of Lemma \ref{lemma-operator-ideals-CmK}, one sees that any non-full-rank ideal of $[m]\times K_{n-1}$ either has the form
\begin{equation}\label{non-full-rank-1}
(L_{2n-1}^{n_0}, L_{i_1}^{n_1}, \dots, L_{i_s}^{n_s}, I_n^{m_0}, L_{j_1}^{m_1}, \dots, L_{j_t}^{m_t})
\end{equation}
or the form
\begin{equation}\label{non-full-rank-2}
(L_{2n-1}^{n_0}, L_{i_1}^{n_1}, \dots, L_{i_s}^{n_s}, I_{n^\prime}^{m_0}, L_{j_1}^{m_1}, \dots, L_{j_t}^{m_t}).
\end{equation}
We say that the above two ideals are \emph{dual} to each other and use ``$\sim$" to denote this relation. It is immediate from Lemma \ref{lemma-operator-ideals-CmK} that  $I\sim I^\prime$ implies that $\Psi(I)\sim \Psi(I^\prime)$, and thus $\Psi^i(I_1)\sim \Psi^i(I_2)$ for any $i\in \bbZ$. In such a case, we say that the two orbits are dual to each other, and write this relation as $\mathcal{O}(I)\sim \mathcal{O}(I^\prime)$.

\section{Another proof of Propp and Roby's theorem}

This section aims to give another proof of Theorem A. We use an NE-SW path to separate an ideal $I$ from its complement. Here NE stands for northeast, and  SW  stands for southwest.
Such a path becomes a binary word when we interpret an NE step by 1 and a SW  step by 0. This process associates a binary word $\Theta(I)$ to each $I\in J([m]\times [n])$.

\begin{example}
 We present the NE-SW  paths  in blue lines for three lower ideals of $[2]\times [3]$ in Fig.~2. Their associated binary words are 01101, 10110 and 11001, respectively. \hfill\qed
\end{example}

\begin{figure}[H]
\centering \scalebox{0.4}{\includegraphics{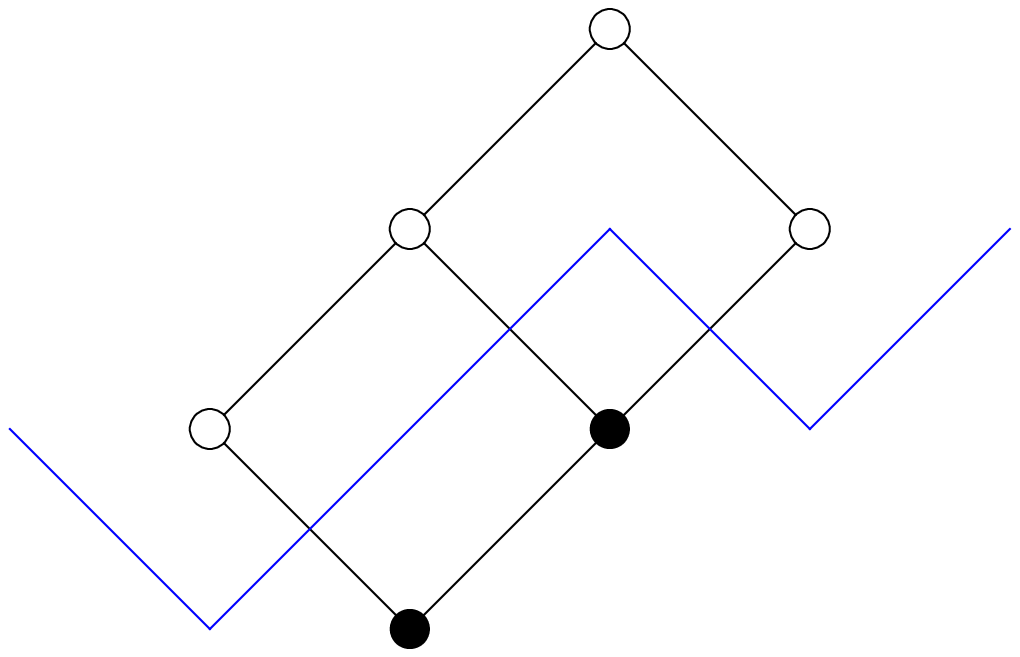}}
\quad\scalebox{0.4}{\includegraphics{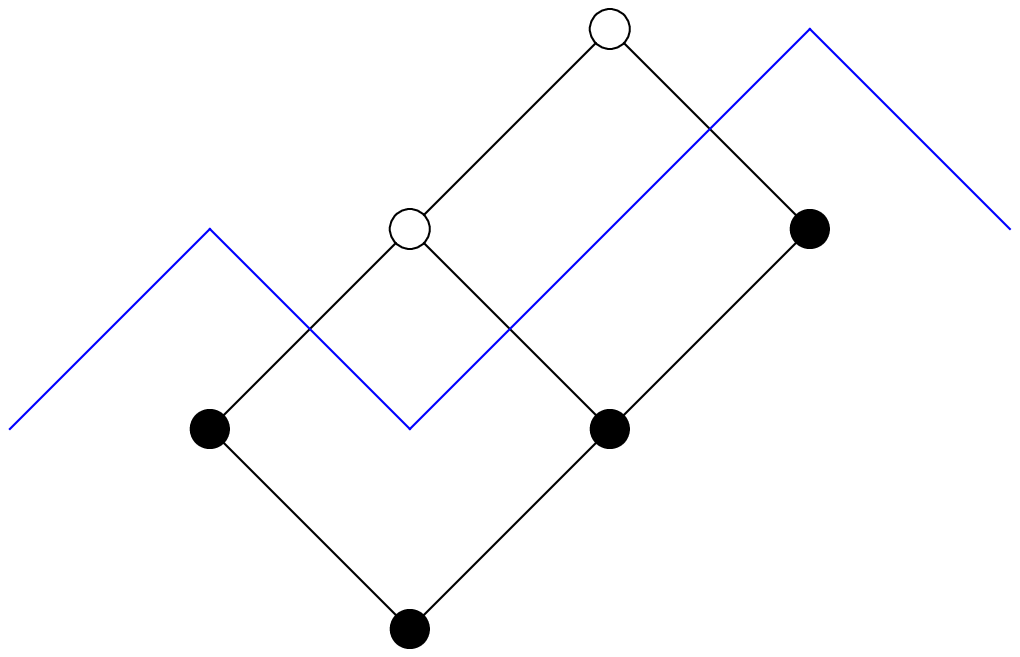}}
\quad
\scalebox{0.4}{\includegraphics{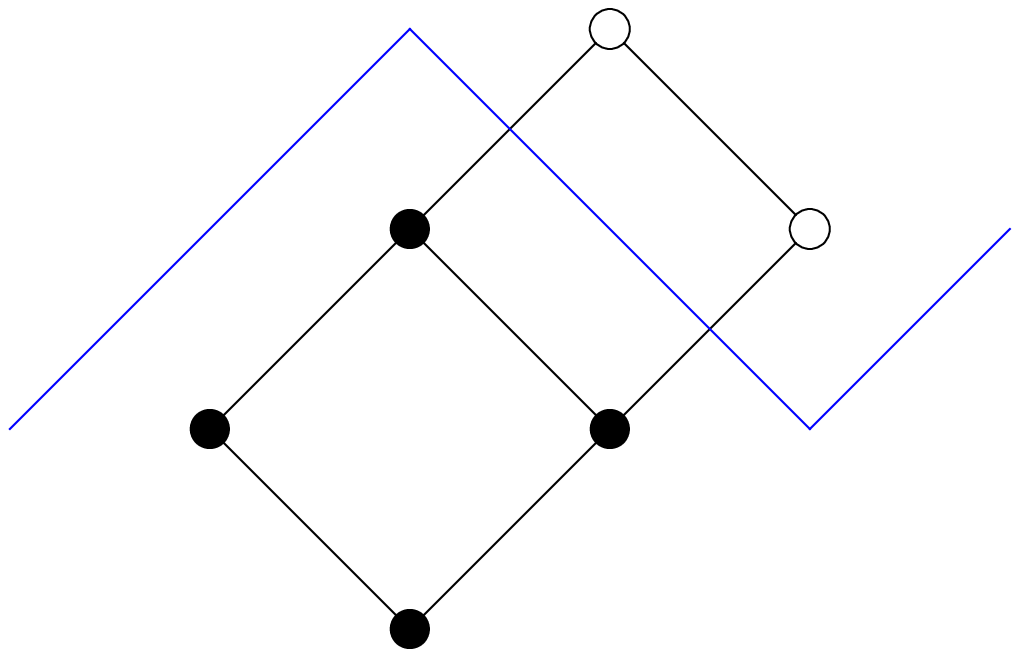}}
\caption{NE-SW paths for three ideals of $[2]\times [3]$}
\end{figure}

Let $B(m, n)$ be the set of binary words with $m$ 0's and $n$ 1's. One sees easily that $\Theta: J([m]\times [n])\to B(m, n)$ is a bijection.

For any integer $a$, we put $1^{a}:=\underbrace{1\dots 1}_{a}$, which is interpreted as the empty word if $a\leq 0$. The notation $0^{a}$ is defined similarly.
A general binary word $w$ in $B(m, n)$ has the form
\begin{equation}\label{binary-word}
w=1^{a_1}0^{b_1}\dots
 1^{a_i} 0^{b_i}\dots 1^{a_s} 0^{b_s},
\end{equation}
where $s\geq 2$, each $a_i$, $b_i$ is positive except for that  $a_1$ and $b_s$ may be $0$.
We define
\begin{equation}\label{psi}
\psi: B(m, n) \to B(m, n)
\end{equation}
via
$$
\psi(w)=0^{b_1-1}1^{a_1+1}\dots
 0^{b_i} 1^{a_i}\dots 0^{b_s+1}1^{a_s-1}.
$$

The following lemma is essentially a translation of Lemma \ref{lemma-operator-ideals-CmP} into the language of binary words. For convenience of reader, we provide a proof.

\begin{lemma}\label{lemma-Pan-binary-word}
Let $I$ be an ideal of $[m]\times [n]$, we have
\begin{equation}
\Theta(\Psi(I))=\psi(\Theta(I)).
\end{equation}
\end{lemma}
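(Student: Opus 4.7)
My plan is to verify the identity $\Theta(\Psi(I))=\psi(\Theta(I))$ by writing down an explicit formula for $\Theta(I)$ in terms of the parameterization of Lemma~\ref{lemma-operator-ideals-CmP} and then comparing both sides block by block.

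First I would examine the NE-SW path associated to a general ideal $I=(L_n^{n_0},L_{i_1}^{n_1},\dots,L_{i_s}^{n_s})$ in the form of Lemma~\ref{lemma-operator-ideals-CmP}. Each copy of $L_{i_j}$ in the block $L_{i_j}^{n_j}$ should contribute one SW step, producing a run $0^{n_j}$, while the jump from rank $i_j$ up to the next distinct rank $i_{j-1}$ (with the convention $i_0=n$) contributes a run of $i_{j-1}-i_j$ NE steps. Tracing the path from one end to the other should then yield
\begin{equation*}
\Theta(I)=1^{i_s}0^{n_s}\,1^{i_{s-1}-i_s}0^{n_{s-1}}\cdots 1^{i_1-i_2}0^{n_1}\,1^{n-i_1}0^{n_0},
\end{equation*}
which is already in the canonical form \eqref{binary-word}, with $s+1$ runs of each symbol and with the extreme runs $a_1=i_s$ and $b_{s+1}=n_0$ being precisely the ones allowed to vanish; the interior runs $n_{s-1},\dots,n_1$ and $i_{s-1}-i_s,\dots,i_1-i_2,n-i_1$ are automatically positive by the constraints of Lemma~\ref{lemma-operator-ideals-CmP}.

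Next I would apply Lemma~\ref{lemma-operator-ideals-CmP} to write
\begin{equation*}
\Psi(I)=(L_{i_1+1}^{n_0+1},L_{i_2+1}^{n_1},\dots,L_{i_s+1}^{n_{s-1}},L_0^{n_s-1}),
\end{equation*}
put this into the canonical form, and re-apply the formula above to obtain $\Theta(\Psi(I))$. Separately, substituting the values $a_k$, $b_k$ from the formula for $\Theta(I)$ into the definition \eqref{psi} of $\psi$ gives $\psi(\Theta(I))$ directly. A routine term-by-term comparison shows that both words equal
\begin{equation*}
0^{n_s-1}\,1^{i_s+1}\,0^{n_{s-1}}\,1^{i_{s-1}-i_s}\cdots 0^{n_1}\,1^{i_1-i_2}\,0^{n_0+1}\,1^{n-i_1-1},
\end{equation*}
which proves the lemma.

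The main obstacle will be the bookkeeping for the several boundary cases, where the canonical form of $\Psi(I)$ degenerates from the generic shape: when $n_s=1$ the trailing block $L_0^{n_s-1}$ vanishes; when $i_1=n-1$ the block $L_{i_1+1}^{n_0+1}$ is really $L_n^{n_0+1}$ and must be absorbed into the top $L_n$-prefix of $\Psi(I)$; and when $i_s=0$ or $n_0=0$ the corresponding extreme runs in $\Theta(I)$ or $\Theta(\Psi(I))$ collapse to length zero. In each such case one has to re-read the word in the correct canonical form \eqref{binary-word} before invoking the definition of $\psi$, but a short inspection shows that the collapsing factors $1^0$ and $0^0$ cancel identically on both sides of the claimed identity.
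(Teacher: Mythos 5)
Your argument is correct, but it takes a genuinely different route from the paper's. The paper proves the lemma directly from the definition $\Psi(A)=\min(P\setminus I(A))$: it isolates a middle block $1^{a_i}0^{b_i}$ of the lattice path for $I$, locates the two corresponding elements of $\Gamma(\Psi(I))$ on the Hasse diagram, and reads off from the picture that the path for $\Psi(I)$ carries $0^{b_i}1^{a_i}$ in that region, leaving the two boundary blocks to the reader; Lemma \ref{lemma-operator-ideals-CmP} is never invoked. You instead make literal the sentence preceding the lemma --- that it is ``essentially a translation of Lemma \ref{lemma-operator-ideals-CmP} into the language of binary words'' --- by writing $\Theta$ of the tuple $(L_n^{n_0},L_{i_1}^{n_1},\dots,L_{i_s}^{n_s})$ in closed form, pushing the tuple through \eqref{rank-level}, and matching the output against the definition \eqref{psi}. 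Your closed formula for $\Theta(I)$, the common value you exhibit for both sides, and your inventory of degenerate subcases ($n_s=1$, $i_1=n-1$, $i_s=0$, $n_0=0$) all check out; for instance your dictionary sends $0011101111$ to $(L_3,L_0,L_0)$ in $[3]\times[7]$ and reproduces the first row of the table in Example \ref{example-P-Q}, which also confirms you have the reading orientation of the path right. What your route buys is a purely symbolic verification with no reliance on Figure 3, at the price of taking Lemma \ref{lemma-operator-ideals-CmP} as input (the paper states it without proof, citing \cite{DW}); conversely the paper's pictorial proof is self-contained. Two small points: your foundational formula for $\Theta(I)$ is only asserted (``should contribute'', ``should then yield'') and deserves one sentence of justification from the shape of the staircase path; and, as with the paper's own proof, the word $1^n0^m$ (i.e.\ $I=[m]\times[n]$, where \eqref{binary-word} forces $s=1$ and $\psi$ as written does not apply) needs the separate treatment given in the remark following the lemma.
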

\begin{proof}
Suppose that $\Theta(I)=w$ has the form \eqref{binary-word}.
Without loss of generality, we assume that $s\geq 3$ and focus on a middle part $1^{a_i} 0^{b_i}$, where $1<i<s$. We draw the corresponding portion of the path for $I$ in red line in Fig.~3, where dots stand for elements in $I$, while circles stand for elements in its complement. Note that the NE (resp. SW) thick red line segment has length $a_i$ (resp. $b_i$). Then two elements of the antichain $\Gamma(\Psi(I))$ are shown in blue circles, and the corresponding portion of the path for $\Psi(I)$ is drawn in blue. Now the SW (resp. NE) thick blue line segment has length $b_i$ (resp. $a_i$), as desired. We omit the similar analysis for the first part $1^{a_1} 0^{b_1}$ and the last part $1^{a_s}0^{b_s}$ of $\Theta(I)$.
\end{proof}

\begin{rmk}
Note that in \eqref{binary-word}, we have $\sum_{i=1}^{s} a_i=n$ and $\sum_{i=1}^{s} b_i=m$.
Moreover, $s=1$ in \eqref{binary-word} if and only if $\Theta(I)=1^n 0^m$, if and only if $I=[m]\times [n]$. In this case, $\Psi(I)$ is the empty ideal with the associated binary word $0^m1^n$.
\end{rmk}

\begin{figure}[H]
\emph{}\centering \scalebox{0.6}{\includegraphics{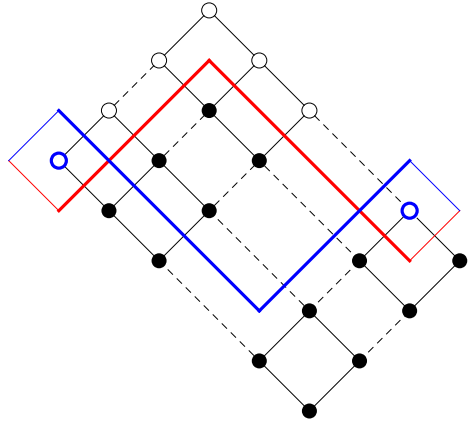}} \caption{Part of the NE-SW paths}
\end{figure}

Let the binary word of $I$ be given by \eqref{binary-word}. Assume that $a_1\geq 1$
and $b_s\geq 1$. The pattern of 0's and 1's are
\begin{equation}\label{initial-zero-pattern}
-0^{b_1}-\cdots -0^{b_i}- \cdots - 0^{b_s}
\end{equation}
and
\begin{equation}\label{initial-one-pattern}
1^{a_1}- \cdots -1^{a_i}- \cdots -1^{a_s}-,
\end{equation}
respectively. There is a unique way to combine \eqref{initial-zero-pattern} and \eqref{initial-one-pattern} in the zig-zag fashion, after which we recover $\Theta(I)$.

If $b_1\geq 2$ and $a_s\geq 2$, then by Lemma \ref{lemma-Pan-binary-word}, the pattern of 0's becomes
\begin{equation}\label{mutated-zero-pattern}
0^{b_1-1}-\cdots -0^{b_i}- \cdots - 0^{b_s}\textbf{0}-.
\end{equation}
Comparing \eqref{mutated-zero-pattern} with \eqref{initial-zero-pattern}, one sees that the leftmost 0 has been bumped out, while we add a new 0 from the right. We remember this new 0 as the $(m+1)$-th, and show it in bold. Note that the $(m+1)$-th 0 is followed by a $-$. This is not an accident: if one continues the analysis for one more step, one sees that the $(m+2)$-th 0 will be separated from the $(m+1)$-th.
On the other hand, the pattern of 1's becomes
\begin{equation}\label{mutated-one-pattern}
-\textbf{1}1^{a_1}- \cdots -1^{a_i}- \cdots -1^{a_s-1}.
\end{equation}
Comparing \eqref{mutated-one-pattern} with \eqref{initial-one-pattern}, one sees that the rightmost 1 has been bumped out, while we add a new 1 from the left. We remember this new 1 as the $(n+1)$-th, and show it in bold. Note that the $(n+1)$-th 1 is preceded by a $-$. Again, this is not an accident: if one continues the analysis for one more step, one sees that the $(n+2)$-th 1 will be separated from the $(n+1)$-th 1. Finally, note that there is a unique way to combine \eqref{mutated-zero-pattern} and \eqref{mutated-one-pattern}  in the zig-zag fashion, after which we recover $\Theta(\Psi(I))$.

Recall that in the above analysis, we have assumed that $a_1\geq 1$,
$b_s\geq 1$, $b_1\geq 2$ and $a_s\geq 2$. The important thing here
is that the assumption ``$a_1\geq 1$ and $b_s\geq 1$" guarantees
that $\Theta(I)$ starts with 1 and ends with 0; while the assumption
``$b_1\geq 2$ and $a_s\geq 2$" guarantees that
$\Theta(\Psi(I))$ starts with 0 and ends with 1. Of course,
there are situations where these  assumptions do not hold. However,
in any case, guided by Lemma \ref{lemma-Pan-binary-word}, the
\textbf{Join-Separate rule} always applies:
\begin{itemize}
\item[$\bullet$]
 if $\Theta(I)$ ends with 1, separate the $(m+1)$-th 0 from the $m$-th 0 by inserting a ``$-$" between them;
otherwise, join the $(m+1)$-th 0  from \emph{right} with the $m$-th
0 of $\Theta(I)$.
\item[$\bullet$] if $\Theta(I)$ starts with 0, separate the $(n+1)$-th 1 from the $n$-th 1 by inserting a ``$-$" between them; otherwise, join the $(n+1)$-th 1 from \emph{left} with the $n$-th 1 of $\Theta(I)$.
\end{itemize}
This rule will be the basic component in the proof of Lemma \ref{lemma-key}.

Given an ideal $I$ of $[m]\times [n]$, recall that $\Gamma(I):=\max(I)$ is the antichain corresponding to $I$. The following lemma is immediate.

\begin{lemma}\label{lemma-antichain-size-binary-word}
Let $I$ be an ideal of $[m]\times [n]$. Then $|\Gamma(I)|$ equals the times that ``10" occurs in $\Theta(I)$.
\end{lemma}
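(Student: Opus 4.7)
The plan is to establish a direct bijection between the maximal elements of $I$ and the occurrences of the pattern "10" in the binary word $\Theta(I)$. I would think of $[m]\times[n]$ as an $m\times n$ grid of cells, with the ideal $I$ shaped as a Young-diagram-like region in, say, the lower-left. The path $\Theta(I)$ then traces the staircase boundary of this region, reading 1 each time the path makes an NE step and 0 each time it makes a SW step.

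The key geometric observation is that the pattern "10" at positions $(k,k+1)$ of $\Theta(I)$ records a place where the path turns from NE to SW, producing a convex corner. Such a corner sits at the upper-right of a unique cell $x$ of the grid, and by definition of the boundary path, $x$ lies in $I$ while the cells directly above $x$ (in the $[m]$-direction) and directly to the right of $x$ (in the $[n]$-direction), whenever they exist in the grid, lie in the complement $P\setminus I$. Hence $x$ is maximal in $I$. The two degenerate situations---when $x$ is on the top row of the grid or on the rightmost column---cause no trouble, because one of the two "forbidden" cells does not exist in $P$, and the absence of an element above or to the right is automatic.

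For the converse, I would argue that any $x\in\Gamma(I)$ gives rise to such a corner: starting from $x$, the path must perform an NE step to pass above $x$ (separating $x\in I$ from the cell above, which lies in $P\setminus I$ or is outside the grid) and then immediately a SW step to descend along the right edge of $x$ (separating $x$ from the cell to the right). Thus each maximal element contributes exactly one "10" pattern, and distinct maximal elements contribute "10" patterns at distinct positions of $\Theta(I)$ because the corners they produce sit at distinct lattice points.

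The main obstacle, which is really bookkeeping rather than a substantive difficulty, is fixing a precise convention identifying each cell of the grid with a pair of consecutive positions in the binary word, so that the statement "the corner above cell $x$ sits between positions $k$ and $k+1$" is unambiguous; once this convention is set, every verification reduces to inspecting a single local configuration of the path near $x$. One may alternatively bypass the geometry entirely by an induction on $m+n$, peeling off the first letter of $\Theta(I)$ (which corresponds to removing a minimal element of the top row or leftmost column of the grid from the ambient poset) and checking that removing a 1 or a 0 from the start of $\Theta(I)$ changes neither $|\Gamma(I)|$ nor the number of "10" patterns, except in the single case where the leading letters are exactly "10", which accounts for the maximal element situated at the corresponding corner.
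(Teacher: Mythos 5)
Your corner-counting bijection between maximal elements of $I$ and occurrences of ``10'' (NE-then-SW turns) in $\Theta(I)$ is correct; the paper declares this lemma ``immediate'' and gives no proof, and your argument is precisely the standard geometric observation it is implicitly relying on. No issues.
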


%
%

Suppose that
\begin{equation}\label{Theta-I}
\Theta(I)= 0^{a_1} 1^{b_k-b_{k-1}}0^{a_2-a_1}1^{b_{k-1}-b_{k-2}}\cdots 0^{a_k-a_{k-1}} 1^{b_1},
\end{equation}
where $a_k=m$, $b_k=n$, $a_1\geq 1$ and $b_1\geq 1$. Now let us define two functions for $I$ on the interval $[1, m+n]$. They will be crucial in calculating the sizes of antichains in the $\Psi$-orbit $\mathcal{O}(I)$.
Let $A=\{a_1+1, \cdots, a_k+1\}$, $B=\{m+1+b_1, \cdots, m+1+b_{k-1}\}$. Put
\begin{equation}\label{P-I}
P_I(i) :=
\begin{cases}
0 &
\mbox{ if } i\in A\cup ([m+2, m+n]\setminus B), \\ 1&
\mbox{ if } i\in  ([1, m+1]\setminus A )\cup B.
\end{cases}
\end{equation}
Let $C=\{b_1+1, \cdots, b_k+1\}$, $D=\{n+1+a_1, \cdots, n+1+a_{k-1}\}$. Put
\begin{equation}\label{Q-I}
Q_I(i) :=
\begin{cases}
0 &
\mbox{ if } i\in ([1, n+1]\setminus C)\cup D, \\ -1&
\mbox{ if } i\in C\cup  ([n+2, n+m]\setminus D ).
\end{cases}
\end{equation}

The following lemma will play a key role in forthcoming discussion.
We postpone its proof to the next section so that one can quickly grasp the sketch.

\begin{lemma}\label{lemma-key}
Suppose that $I$ is the lower ideal of $[m]\times [n]$ given by \eqref{Theta-I}. Then for $i\in [1, m+n]$, we have
\begin{equation}\label{antichain-size-P-Q}
\big|\Gamma\big(\Psi^i(I)\big)\big|=k-1+\sum_{j=1}^{i}(P_I(j)+Q_I(j)).
\end{equation}
\end{lemma}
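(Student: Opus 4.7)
My plan is to prove the identity by induction on $i$, splitting it into a base case at $i = 0$ (where the formula reads $|\Gamma(I)| = k-1$) and the incremental claim
\[
\bigl|\Gamma(\Psi^i(I))\bigr| - \bigl|\Gamma(\Psi^{i-1}(I))\bigr| = P_I(i) + Q_I(i), \qquad 1 \leq i \leq m+n.
\]
The base case follows immediately from the strictly alternating form of $\Theta(I)$ in \eqref{Theta-I}: its interior contains exactly $k-1$ transitions of type ``$10$'', so Lemma \ref{lemma-antichain-size-binary-word} gives $|\Gamma(I)| = k-1$.

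For the inductive step I would translate one application of $\Psi$ into one application of $\psi$ via Lemma \ref{lemma-Pan-binary-word} and read off how the number of ``$10$'' patterns changes directly from the formula in \eqref{psi}. A short case analysis based on the first and last bits of $w = \Theta(\Psi^{i-1}(I))$ shows that the change splits cleanly as the sum of a right-end contribution in $\{0,+1\}$ (equal to $+1$ precisely when $w$ ends with $1$) and a left-end contribution in $\{-1,0\}$ (equal to $-1$ precisely when $w$ starts with $1$). This is exactly the kind of additive decomposition encoded by $P_I + Q_I$; the task therefore reduces to identifying the right-end contribution with $P_I(i)$ and the left-end contribution with $Q_I(i)$.

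The crux of the argument is then to verify that the sets $A, B$ in \eqref{P-I} and $C, D$ in \eqref{Q-I} record precisely the steps at which the trailing and leading bits of $\Theta(\Psi^{i-1}(I))$ flip. To carry this out I would march through the orbit starting from \eqref{Theta-I} using the Join-Separate rule: the rule tells us that the trailing bit flips exactly at those steps when the bit currently bumping through the right end (an original bit of \eqref{Theta-I} for $i \in [1, m+1]$, or a re-inserted bit for $i \in [m+2, m+n]$) sits at a block boundary of \eqref{Theta-I}. The positions of those boundaries are $a_1+1, \dots, a_k+1$, giving $A$, and, after the shift by $m+1$, the positions $b_1, \dots, b_{k-1}$, giving $B$. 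A symmetric argument identifies the leading-bit flips with $C \cup D$.

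The step I expect to be the main obstacle is the careful bookkeeping at the transition points where blocks of $\Theta(\Psi^{i-1}(I))$ merge or disappear, because the Join-Separate rule behaves differently in each corner case (for instance when $b_1 = 1$ or $a_s = 1$ in the notation of \eqref{binary-word}). However, these corner cases coincide precisely with the indices listed in $A \cup B \cup C \cup D$, so a uniform case analysis at those indices should align the increments with $P_I(i) + Q_I(i)$. As a consistency check, a direct count gives $\sum_{j=1}^{m+n}(P_I(j) + Q_I(j)) = m - m = 0$, matching the periodicity $\Psi^{m+n} = \id$ on $J([m]\times [n])$, which confirms the global closure of the induction.
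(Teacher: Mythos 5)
Your overall strategy coincides with the paper's (Section 4): establish the base case $|\Gamma(I)|=k-1$ and telescope the one-step identity $|\Gamma(\Psi^i(I))|-|\Gamma(\Psi^{i-1}(I))|=P_I(i)+Q_I(i)$, reading the increment off the endpoint bits of $w=\Theta(\Psi^{i-1}(I))$. Your computation of the increment is correct: counting ``10''-factors before and after $\psi$ in \eqref{binary-word} gives a change of $[w \text{ ends with } 1]-[w\text{ starts with }1]$. However, your proposed term-by-term identification is wrong: $P_I$ does not record the trailing bit and $Q_I$ does not record the leading bit. The correct statements are $P_I(i)=1$ exactly when $\Theta(\Psi^{i-1}(I))$ \emph{starts} with $0$, and $Q_I(i)=-1$ exactly when it \emph{ends} with $0$; i.e.\ your right-end contribution equals $1+Q_I(i)$ and your left-end contribution equals $P_I(i)-1$. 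Your matching already fails at $i=8$ in Example \ref{example-P-Q}, where $\Theta(\Psi^{7}(I))=0111111010$ makes both of your contributions equal to $0$ while $P_I(8)=1$ and $Q_I(8)=-1$; only the sum survives. This is repairable, but it propagates into your claim about which of $A,B$ versus $C,D$ track which end of the word.

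The genuine gap is the step you yourself flag as the obstacle. For $i\in[1,m+1]$ (resp.\ $i\in[1,n+1]$) the relevant endpoint bit is indeed read off from the $0$-block (resp.\ $1$-block) boundaries of \eqref{Theta-I}, which is where $A$ (resp.\ $C$) comes from. But for the remaining indices the endpoint bits are governed by the \emph{newly inserted} zeros and ones, and their join/separate pattern is not visible in $\Theta(I)$ directly: by the Join-Separate rule, the pattern of the $(m+1)$-th through $(m+n)$-th zeros is dictated by the trailing-bit history of the first $n$ steps (hence by the $b_j$'s), the pattern of the $(n+1)$-th through $(n+m)$-th ones by the leading-bit history of the first $m$ steps (hence by the $a_j$'s), and these feed back into the later endpoint bits. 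Your sentence ``after the shift by $m+1$, the positions $b_1,\dots,b_{k-1}$, giving $B$'' asserts the outcome of this mutual recursion without proving it. Resolving it is exactly the content of the paper's Lemma \ref{lemma-binary-word-pattern}, which pins down the long sequences \eqref{long-0-sequence} and \eqref{long-1-sequence} by an alternating bootstrap (the first $n$ ones determine the next $n$ zeros, the first $m$ zeros determine the next $m$ ones, and so on). Without an argument of this type your induction cannot evaluate $P_I(i)$ for $i>m+1$ or $Q_I(i)$ for $i>n+1$, so the proof is incomplete as it stands.
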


Let us give an eample to illustrate the formula \eqref{antichain-size-P-Q}.

\begin{example}\label{example-P-Q}
Fix $m=3$, $n=7$. Take $a_1=2, a_2=3$, $b_1=4$, $b_2=7$. That is, $I$ is the lower ideal of
$[3]\times [7]$ with binary word $0011101111$. Then one finds that
$$
P_I(i) =
\begin{cases}
1 &
\mbox{ if } i=1, 2, 8; \\ 0&
\mbox{ otherwise;}
\end{cases}
$$
and that
$$
Q_I(i) =
\begin{cases}
-1 &
\mbox{ if } i=5, 8, 9; \\ 0&
\mbox{ otherwise.}
\end{cases}
$$
On the other hand, by Lemma \ref{lemma-Pan-binary-word}, one calculates the binary words for
$\Psi^{i}(I)$ for $i\in [1, 10]$ as follows:
\begin{center}
\begin{tabular}{c|c}
$i$ &   $\Theta\big(\Psi^{i}(I)\big)$ \\ \hline
 $1$ & $0101110111$ \\
 $2$ & $1010111011$ \\
 $3$ & $1101011101$ \\
 $4$ & $1110101110$ \\
 $5$ & $1111010011$ \\
 $6$ & $1111100101$ \\
 $7$ & $0111111010$ \\
 $8$ & $1011111100$ \\
 $9$ & $1100011111$ \\
 $10$ & $0011101111$ \\
\end{tabular}
\end{center}
Then by Lemma \ref{lemma-antichain-size-binary-word} it is direct to check that the formula \eqref{antichain-size-P-Q} holds for each $i\in [1, 10]$.  \hfill\qed
\end{example}

\begin{prop}\label{prop-antichain-A}
Suppose that $I$ is the ideal whose binary word $\Theta(I)$ is given by \eqref{Theta-I}. Then
$$\sum_{i=1}^{m+n} \big|\Gamma\big(\Psi^i(I)\big)\big|=mn.$$
\end{prop}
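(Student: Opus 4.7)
The plan is to plug Lemma~\ref{lemma-key} into the sum, swap the order of summation, and then evaluate the resulting weighted sum of $P_I+Q_I$ directly from \eqref{P-I} and \eqref{Q-I}. More precisely, substituting \eqref{antichain-size-P-Q} and interchanging the two summations will yield
\[
\sum_{i=1}^{m+n}\bigl|\Gamma(\Psi^i(I))\bigr| \;=\; (m+n)(k-1) \;+\; \sum_{j=1}^{m+n}(m+n+1-j)\bigl(P_I(j)+Q_I(j)\bigr),
\]
so the problem reduces to showing that the weighted sum on the right equals $mn-(k-1)(m+n)$.

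To evaluate this weighted sum, I would split each of the $P_I$- and $Q_I$-contributions according to the explicit supports in \eqref{P-I} and \eqref{Q-I}. A quick check of the ranges confirms that $A\subseteq[2,m+1]$ and $B\subseteq[m+2,m+n]$ lie in disjoint subintervals of $[1,m+n]$, and likewise $C\subseteq[2,n+1]$ and $D\subseteq[n+2,n+m]$. Consequently the $P_I$-contribution becomes
\[
\sum_{j=1}^{m+1}(m+n+1-j)\;-\;\sum_{i=1}^{k}(m+n-a_i)\;+\;\sum_{i=1}^{k-1}(n-b_i),
\]
and the $Q_I$-contribution has an analogous three-piece form built from $\sum_{j=n+2}^{n+m}(m+n+1-j)$, $\sum_{i=1}^{k}(m+n-b_i)$ and $\sum_{i=1}^{k-1}(m-a_i)$, with an overall minus sign.

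The last step would be to collect terms, using $a_k=m$ and $b_k=n$ to merge the partial sums. The $a_i$-dependent combination telescopes via $(m+n-a_i)-(m-a_i)=n$ down to $-kn$; the $b_i$-dependent combination collapses analogously to $-km$; and the two arithmetic progressions over $[1,m+1]$ and $[n+2,n+m]$ combine to $mn+m+n$. Adding these pieces yields exactly $mn-(k-1)(m+n)$, which combines with the leading $(m+n)(k-1)$ to produce the desired total $mn$.

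The main obstacle will be purely bookkeeping: one must carefully identify the four support sets $A,B,C,D$ and verify their disjointness from the relevant complementary intervals, and one must track where the assumptions $a_1\ge 1$ and $b_1\ge 1$ enter (they ensure that $\Theta(I)$ truly starts with $0$ and ends with $1$, so that the explicit forms of $A,B,C,D$ in \eqref{Theta-I} are valid). Once the splittings and the two telescopings are set up cleanly, the final cancellation of the $(k-1)(m+n)$ terms is automatic and the answer $mn$ drops out with no further input.
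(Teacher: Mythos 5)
Your proposal is correct and follows essentially the same route as the paper: substitute Lemma \ref{lemma-key}, interchange the two summations to get the weights $N+1-j$, and then evaluate the weighted sum of $P_I+Q_I$ over the explicit support sets $A,B,C,D$; your telescoping computation (yielding $-kn$, $-km$, and $mn+m+n$) is just a more explicit version of the ``elementary calculations'' the paper leaves to the reader, and the paper's only extra wrinkle is to first use that both support sets have cardinality $m$ to replace the weights $N+1-i$ by $i$.
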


\begin{proof}
For convenience, we temporarily put $N=m+n$. We have that
\begin{align*}
\sum_{i=1}^{N} \big|\Gamma\big(\Psi^i(I)\big)\big| &=(k-1)N+ \sum_{i=1}^{N}\sum_{j=1}^{i}(P_I(j)+Q_I(j))\\
&= (k-1)N+ \sum_{i=1}^{N}(N+1-i)(P_I(i)+Q_I(i))  \\
&=(k-1)N+ \sum_{i\in  ([1, m+1]\setminus A )\cup B}(N+1-i)-  \sum_{i\in C\cup  ([n+2, N]\setminus D )}(N+1-i) \\
&=(k-1)N+ \sum_{i\in C\cup  ([n+2, N]\setminus D) }i - \sum_{i\in  ([1, m+1]\setminus A )\cup B}i,
\end{align*}
where the first step uses \eqref{antichain-size-P-Q}, the third step cites \eqref{P-I} and \eqref{Q-I}, while the last step uses the fact that both $([1, m+1]\setminus A )\cup B$ and $C\cup  ([n+2, N]\setminus D)$ have cardinality $m$. Now the desired result follows since elementary calculations lead to
$$
\sum_{i\in C\cup  ([n+2, N]\setminus D )}i - \sum_{i\in  ([1, m+1]\setminus A)\cup B}i=mn-(k-1)N.
$$

\end{proof}

Now we are ready to prove Propp and Roby's theorem.

\medskip
\noindent \emph{Proof of Theorem A.}
As we shall see in Remark \ref{rmk-lemma-binary-word-pattern}, the reverse operator $\Psi_{[m]\times [n]}$ has order $m+n$. Therefore, for any $I\in J([m]\times [n])$, it suffices to verify that
\begin{equation}\label{A-target}
\sum_{i=1}^{m+n} \big|\Gamma\big(\Psi^i(I)\big)\big|=mn.
\end{equation}
When $\Theta(I)$ starts with 0 and ends with 1, say of the form \eqref{Theta-I}, this has been done in Proposition \ref{prop-antichain-A}. Our argument works for the other three cases as well.
\hfill\qed
\medskip

\section{Proof of Lemma \ref{lemma-key}}

This section is devoted to proving Lemma \ref{lemma-key}. We adopt the notations in Section 3.
We always suppose that \eqref{Theta-I} holds. Namely, let $I$ be the lower ideal of  $[m]\times [n]$ corresponding to the binary word
\begin{equation}\label{Theta-I-new}
0^{a_1} 1^{b_k-b_{k-1}}0^{a_2-a_1}1^{b_{k-1}-b_{k-2}}\cdots 0^{a_k-a_{k-1}} 1^{b_1},
\end{equation}
where $a_k=m$, $b_k=n$, $a_1\geq 1$ and $b_1\geq 1$.

For any $a\in\mathbb{P}$, we put $$\underbrace{0-0}_{a}:=\underbrace{0-\cdots-0-\cdots-0}_{a}.$$
Note that $\underbrace{0-0}_{1}=0$. The notation $\underbrace{1-1}_{a}$ is defined similarly.
We associate the \textbf{long $0-$ sequence} to $I$ as follows:
\begin{equation}\label{long-0-sequence}
0^{a_1}-0^{a_2-a_1}-\cdots-0^{a_k-a_{k-1}}-\underbrace{0-0}_{b_1}
\underbrace{0-0}_{b_2-b_1}\cdots \underbrace{0-0}_{b_k-b_{k-1}}0^{a_1}-0^{a_2-a_1}-\cdots-0^{a_k-a_{k-1}}-
\end{equation}
Note that this sequence contains $2m+n$ 0's in total, and ends with $-$.
Fix any $i\in [1, m+n]$. Cut out the consecutive segment of \eqref{long-0-sequence} starting with the $(i+1)$-th 0 and ending with the $(i+m)$-th 0, and include the ``$-$" left (resp. right) to  the $(i+1)$-th 0 (resp. $(i+m)$-th 0) if there is such a ``$-$". We call this segment the \textbf{$i$-th $0-$ sequence} for $I$.

In a similar fashion, we associate the \textbf{long $1-$ sequence} to $I$ as follows:
\begin{equation}\label{long-1-sequence}
-1^{b_k-b_{k-1}}-\cdots-1^{b_2-b_1}-1^{b_1}\underbrace{1-1}_{a_k-a_{k-1}}
\cdots \underbrace{1-1}_{a_2-a_1}\underbrace{1-1}_{a_1}-1^{b_k-b_{k-1}}-\cdots-1^{b_2-b_1}-1^{b_1}
\end{equation}
Note that this sequence contains $m+2n$ 1's in total, and \emph{ends} with $-$. Here we always read the long 1$-$ sequence and its consecutive segments \emph{from right to left}. For instance, cut out the consecutive segment of \eqref{long-1-sequence} starting with the first $1$ and the $(n+a_1)$-th 1, we get
$$
\underbrace{1-1}_{a_1}-1^{b_k-b_{k-1}}-\cdots-1^{b_2-b_1}-1^{b_1}.
$$
Fix any $i\in [1, m+n]$. Cut out the consecutive segment of \eqref{long-1-sequence} starting with the $(i+1)$-th 1 and ending with the $(i+n)$-th 1, and include the ``$-$" left (resp. right) to  the $(i+n)$-th 1 (resp. $(i+1)$-th 1) if there is such a ``$-$". We call this segment the \textbf{$i$-th $1-$ sequence} for $I$.

The following lemma reads $\Theta(\Psi^i(I))$ from \eqref{long-0-sequence} and \eqref{long-1-sequence} for any $i\in [1, m+n]$.

\begin{lemma}\label{lemma-binary-word-pattern}
Let $I$ be the lower ideal of $[m]\times [n]$ given by \eqref{Theta-I-new}. Fix any $i\in [1, m+n]$.
There is a unique way to combine the $i$-th $0-$ sequence and the $i$-th $1-$ sequence for $I$ in the zig-zag fashion, and one gets $\Theta(\Psi^i(I))$.

\end{lemma}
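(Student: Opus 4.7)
The plan is to prove Lemma~\ref{lemma-binary-word-pattern} by induction on $i$, using the Join--Separate rule (Lemma~\ref{lemma-Pan-binary-word}) as the main recurrence. For the base case $i=0$, one checks directly that the $0$-th $0-$sequence (the first $m$ zeros of the long $0-$sequence, i.e., Part I) coincides with the $0$-pattern of $\Theta(I)$, and that the $0$-th $1-$sequence agrees with the $1$-pattern of $\Theta(I)$; their zig-zag combination is then $\Theta(I)=\Theta(\Psi^0(I))$.

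For the inductive step, assume the result at stage $i-1$, so that the $0$-pattern and $1$-pattern of $\Theta(\Psi^{i-1}(I))$ are given by the $(i-1)$-th $0-$sequence and $1-$sequence. Applying the Join--Separate rule, the $0$-pattern of $\Theta(\Psi^i(I))$ is obtained by deleting its leftmost $0$ (the $i$-th $0$ of the long $0-$sequence) and appending a new rightmost $0$, which we identify with the $(i+m)$-th $0$ of the long $0-$sequence. The rule prescribes a ``$-$'' between the new $0$ and the previous rightmost precisely when $\Theta(\Psi^{i-1}(I))$ ends with $1$. I must therefore verify that this matches the actual structure of the long $0-$sequence between its $(i+m-1)$-th and $(i+m)$-th zeros; a symmetric assertion is needed for the $1-$sequence. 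Once that matching is established, the updated $0$-pattern is block-for-block equal to the $i$-th $0-$sequence (including leading and trailing ``$-$''s), and the $i$-th $1-$sequence follows dually.

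The heart of the matter, and the step I expect to be the main obstacle, is the auxiliary claim: $\Theta(\Psi^{i-1}(I))$ ends with $1$ if and only if the long $0-$sequence has a ``$-$'' between its $(i+m-1)$-th and $(i+m)$-th zero, with the symmetric statement for the $1-$sequence. I would package this equivalence with the main statement into a combined inductive hypothesis: in any word reconstructed by zig-zag combination, the trailing ``$-$'' of the $0$-pattern is present exactly when the word ends with $1$, so reading off the trailing ``$-$'' from the hypothesis at stage $i-1$ supplies the required equivalence. The nontrivial point is that this trailing ``$-$'' of the $(i-1)$-th $0-$sequence lies at position $i+m-1$ of the long $0-$sequence, so the Join--Separate prescription and the long sequence agree by construction at the right boundary.

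What makes the bookkeeping delicate is the interplay of the three parts of the long $0-$sequence and the three parts of the long $1-$sequence. One must track the boundary between Part I and Part II (which always carries a ``$-$'') and the boundary between Part II and Part III (which never does), and one must verify that the sub-block sizes $b_1,\,b_2-b_1,\,\ldots,\,b_k-b_{k-1}$ in Part II of the long $0-$sequence are precisely the $1$-block sizes of $\Theta(I)$ read from right to left, which is the reason the separator structure propagates correctly under $\Psi$. A degenerate case arises when the leftmost or rightmost $0$-block of $\Theta(\Psi^{i-1}(I))$ has size $1$: the Join--Separate step then collapses an old block or creates a new one, and these are exactly the positions where a ``$-$'' appears or disappears in the updated pattern. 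One must match these instances one by one against the long $0-$sequence structure; the same template, with the three parts of the long $1-$sequence playing the dual role, handles the $1-$sequence.
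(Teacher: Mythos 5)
Your overall strategy---induct on $i$, use the Join--Separate rule as the recurrence, and check the resulting patterns against the explicitly defined long sequences---is the same as the paper's. But you have misplaced the crux, and as written your induction does not close. The equivalence you single out as ``the heart of the matter'' (that $\Theta(\Psi^{i-1}(I))$ ends with $1$ if and only if the long $0-$sequence carries a ``$-$'' between its $(m+i-1)$-th and $(m+i)$-th zeros) is tautological given the inductive hypothesis: that ``$-$'' \emph{is} the trailing ``$-$'' of the $(i-1)$-th $0-$sequence, which by hypothesis is the $0$-pattern of $\Theta(\Psi^{i-1}(I))$, and a $0$-pattern arising from a zig-zag combination ends with ``$-$'' exactly when the word ends with $1$. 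So this step costs nothing and only settles the \emph{internal} separators of the new $0$-pattern.

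The genuine obstacle sits at the other boundary. By definition the $i$-th $0-$sequence also includes the ``$-$'' to the \emph{right} of the $(m+i)$-th zero whenever \eqref{long-0-sequence} has one there, and for the stage-$i$ zig-zag combination to be well defined and to terminate in the correct letter, that trailing ``$-$'' must be present exactly when $\Theta(\Psi^{i}(I))$ ends with $1$. But whether $\Psi$ produces a word ending in $1$ is governed by the $1$-side ($\psi(w)$ ends in $1^{a_s-1}$, so the condition is that the last $1$-block of $\Theta(\Psi^{i-1}(I))$ has length at least $2$, i.e.\ that there is no ``$-$'' between the $i$-th and $(i+1)$-th ones of the long $1-$sequence). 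This is a cross-consistency condition between the two explicitly defined sequences \eqref{long-0-sequence} and \eqref{long-1-sequence}; it is not delivered by the inductive hypothesis plus the Join--Separate rule, and it is precisely the content of the paper's five-stage verification (A)--(E), in which Part I of each long sequence is shown to generate, via the rule, Part II of the other, and Part II to generate Part III. Your proposal gestures at this (matching the block sizes $b_1, b_2-b_1,\dots$ against the $1$-blocks of $\Theta(I)$) but files it under ``delicate bookkeeping'' with no mechanism for carrying it out inside the induction; without that global check the argument is incomplete, since an arbitrary extension of the initial $m$ zeros would satisfy every step you actually verify.
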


\begin{proof}
It amounts to check that \eqref{long-0-sequence} and \eqref{long-1-sequence} meet the requirements from the scratch. Initially, the first $m$ 0's in \eqref{long-0-sequence} are obtained by replacing each consecutive part of 1's in \eqref{Theta-I-new} with a $-$. Similarly, the first $n$ 1's in \eqref{long-1-sequence} (counted from right to left) are obtained by replacing each consecutive part of 0's in \eqref{Theta-I-new} with a $-$. Recall that $a_k=m$ and $b_k=n$.

Suppose that  $\Theta(\Psi^{i-1}(I))$ has been settled. Let
us consider $\Theta(\Psi^i(I))$. Note that
$\Theta(\Psi^{i-1}(I))$ ends with 0 if and only if the
$(i-1)$-th 1$-$ sequence starts with 1$-$ (recall that we read this
sequence from right to left); $\Theta(\Psi^{i-1}(I))$ starts
with 1 if and only if the $(i-1)$-th 0$-$ sequence starts with $-$0.
Thus the Join-Separate rule says that
\begin{itemize}
\item[$\bullet$]
 if the $(i-1)$-th 1$-$ sequence starts with ``1", add ``$-$0" to  the right side of the $(m+i-1)$-th 0;
 if the $(i-1)$-th 1$-$ sequence starts with ``1$-$", add ``0" to  the right side of the $(m+i-1)$-th 0.
\item[$\bullet$] if the $(i-1)$-th 0$-$ sequence starts with ``0", add ``1$-$" to the left side of the $(n+i-1)$-th 1; if the $(i-1)$-th 0$-$ sequence starts with ``$-$0", add ``1" to the left side of the $(n+i-1)$-th 1.
\end{itemize}

\textbf{(A).}  Firstly, let us analyze the pattern of the next $n$ 0's. Since the first $n$ 1's (counted from right to left) are of the form $$-1^{b_k-b_{k-1}}-\cdots-1^{b_2-b_1}-1^{b_1},$$
the $(m+1)$-th 0 to the $(m+n)$-th 0 are of the form
\begin{equation}\label{0-middle}
-\underbrace{0-0}_{b_1}
\underbrace{0-0}_{b_2-b_1}\cdots \underbrace{0-0}_{b_k-b_{k-1}}
\end{equation}
by the Join-Separate rule. This agrees with
\eqref{long-0-sequence} up to the $(m+b_k)$-th 0.

\textbf{(B).}  Secondly, let us switch to analyze the pattern of the next $m$ 1's.  Since the first $m$ 0's are of the form
$$
0^{a_1}-0^{a_2-a_1}-\cdots - 0^{a_k-a_{k-1}}-,
$$
the $(n+1)$-th 1 to the $(n+m)$-th 1 (counted from right to left) are of the form
\begin{equation}\label{1-middle}
\underbrace{1-1}_{a_k-a_{k-1}}
\cdots \underbrace{1-1}_{a_2-a_1}\underbrace{1-1}_{a_1}-
\end{equation}
by the Join-Separate rule. This agrees with
\eqref{long-1-sequence} up to the $(n+a_k)$-th 1.

\textbf{(C).}  Thirdly, let us come back to analyze the pattern of the further next $m$ 0's. Since the $(n+1)$-th 1 to the $(n+m)$-th 1 are given by \eqref{1-middle}, the $(m+n+1)$-th 0 to the $(m+n+a_k)$-th 0 are of the form
$$0^{a_1}-0^{a_2-a_1}-\cdots-0^{a_k-a_{k-1}}$$
by the Join-Separate rule. This agrees with
\eqref{long-0-sequence} up to the $(m+b_k+a_k)$-th 0.

\textbf{(D).}  Fourthly,  let us switch to analyze the pattern of the further next $n$ 1's.
Since the $(m+1)$-th 0 to the $(m+n)$-th 0 are of the form \eqref{0-middle}, the $(n+m+1)$-th 1 to the $(n+m+n)$-th 1 (counted from right to left) are of the form
$$
1^{b_k-b_{k-1}}-\cdots-1^{b_2-b_1}-1^{b_1}
$$
by the Join-Separate rule. This agrees with
\eqref{long-1-sequence} up to the $(n+a_k+b_k)$-th 1.

\textbf{(E).}  Finally, the $(m+n)$-th 1$-$ sequence starts with
$1$. Thus we should add  ``$-$0" to the $(2m+n)$-th 0 according to
the Join-Separate rule. This agrees with the last $-$ of
\eqref{long-0-sequence}. Similarly, the $(m+n)$-th 0$-$ sequence
starts with 0. Thus we should add ``1$-$" to the left side of the
$(m+2n)$-th 1 in view of the Join-Separate rule. This agrees
with the last $-$ of \eqref{long-1-sequence}, and the proof
finishes.
\end{proof}
\begin{rmk}\label{rmk-lemma-binary-word-pattern}
Note that the five steps above are carried out in the zig-zag way. Moreover, Lemma \ref{lemma-binary-word-pattern} also says that $\Psi^{m+n}(I)=I$. Then it follows immediately that the reverse operator $\Psi_{[m]\times [n]}$ has order $m+n$. This partially recovers Theorem 2 of Fon-Der-Flaass \cite{F} saying that $\mathcal{O}(I)$ has length $(m+n)/d$ for some $d$ dividing both $m$ and $n$.
\end{rmk}

\begin{example}
Let us revisit Example \ref{example-P-Q}, where $a_1=2, a_2=3$, $b_1=4$, $b_2=7$ and $I$ is the lower ideal of
$[3]\times [7]$ with binary word $0011101111$. Now the long $0-$sequence and $1-$sequence for $I$ are
$$
00-0-0-0-0-00-0-000-0-
$$
and
$$
-111-111111-1-111-1111,
$$
respectively. The third $0-$sequence and $1-$ sequence are
$$
-0-0-0-,\quad  11-1-111-1.
$$
Combining them in the zig-zag fashion gives $1101011101$, which agrees with the third row of the table in Example \ref{example-P-Q}. Similarly, the tenth $0-$sequence and $1-$ sequence are
$$
00-0-,\quad  -111-1111.
$$
Combining them in the zig-zag fashion gives $0011101111$, concurring with the last row of the table in Example \ref{example-P-Q}.
 \hfill\qed
\end{example}

\begin{lemma}\label{lemma-antichain-one-step-P-Q}
Let $I$ be the lower ideal of $[m]\times [n]$  given by \eqref{Theta-I-new}. For any $i\in [1, m+n]$, we have
\begin{equation}\label{antichain-one-step-P-Q}
\big|\Gamma\big(\Psi^i(I)\big)\big|=
\big|\Gamma\big(\Psi^{i-1}(I)\big)\big|+P_I(i)+Q_I(i).
\end{equation}
\end{lemma}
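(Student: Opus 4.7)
The plan is to reduce \eqref{antichain-one-step-P-Q} to a pointwise match between $P_I(i), Q_I(i)$ and the \emph{dash pattern} of the long $0$- and $1$-sequences. By Lemma \ref{lemma-antichain-size-binary-word}, $|\Gamma(\Psi^i(I))|$ counts the occurrences of $10$ in $\Theta(\Psi^i(I))$, equivalently the number of maximal $0$-blocks minus $[\text{word begins with }0]$. Lemma \ref{lemma-binary-word-pattern} lets me read this count directly off the $i$-th $0-$sequence, because each internal dash in that sequence corresponds to a run of $1$'s in the zig-zag word that separates two maximal $0$-blocks.

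To formalize, introduce an indicator $\epsilon_j\in\{0,1\}$ recording whether a ``$-$'' sits immediately to the right of the $j$-th $0$ in \eqref{long-0-sequence} (with a virtual $0$-th $0$ at the left end). Then the $i$-th $0-$sequence contains $1+\sum_{j=i+1}^{i+m-1}\epsilon_j$ maximal $0$-blocks and begins with $0$ iff $\epsilon_i=0$, giving the clean formula
$$|\Gamma(\Psi^i(I))| \;=\; \sum_{j=i}^{i+m-1}\epsilon_j,$$
whose first difference in $i$ is $\epsilon_{i+m-1}-\epsilon_{i-1}$. Running the same argument on \eqref{long-1-sequence} (read right-to-left) with the analogous dash indicator $\eta_j$ yields the symmetric formula $|\Gamma(\Psi^i(I))|=\sum_{j=i}^{i+n-1}\eta_j$, whose first difference is $\eta_{i+n-1}-\eta_{i-1}$. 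The identity \eqref{antichain-one-step-P-Q} then reduces to the two identifications
$$P_I(i)=\eta_{i+n-1},\qquad -Q_I(i)=\eta_{i-1}.$$

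To verify these, I would read off from \eqref{long-1-sequence} exactly when $\eta_j=1$: namely $j\in\{b_1,\dots,b_k\}$ for $j\in[0,n]$; $j\notin\{n+a_1,\dots,n+a_k\}$ for $j\in[n,n+m]$; $j\in\{n+m+b_1,\dots,n+m+b_{k-1}\}$ for $j\in[n+m+1,2n+m-1]$; together with $\eta_{2n+m}=1$. Substituting $j=i+n-1$ and $j=i-1$ and comparing with \eqref{P-I}, \eqref{Q-I} gives the desired equalities in each regime of $i$. The main obstacle will be the boundary bookkeeping at $j=n$ and $j=n+m$, and the special status of $a_k=m$ and $b_k=n$: these are precisely what determine whether the endpoints of the intervals $[1,m+1]$, $[m+2,m+n]$, $[1,n+1]$, $[n+2,n+m]$ appearing in the definitions of $P_I$ and $Q_I$ are included or excluded. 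Once these edge cases are pinned down, the identification is mechanical.
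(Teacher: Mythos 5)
Your proposal is correct and follows essentially the same route as the paper: the paper also reduces the claim, via Lemmas \ref{lemma-antichain-size-binary-word} and \ref{lemma-binary-word-pattern}, to the observation that passing from the $(i-1)$-th to the $i$-th $1-$ sequence deletes the $i$-th $1$ and adds the $(n+i)$-th $1$ of \eqref{long-1-sequence}, whose adjacent-dash indicators are precisely $-Q_I(i)$ and $P_I(i)$. Your window sum $\sum_{j=i}^{i+n-1}\eta_j$ is just the telescoped form of that one-step delete/add argument, and your explicit description of the set where $\eta_j=1$ matches \eqref{P-I} and \eqref{Q-I} after the stated index shifts.
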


\begin{proof}
In view of Lemmas \ref{lemma-antichain-size-binary-word} and  \ref{lemma-binary-word-pattern}, we should analyze the difference between the number of ``10"s
in $\Theta(\Psi^i(I))$ and that in $\Theta(\Psi^{i-1}(I))$. Going from $\Theta(\Psi^{i-1}(I))$ to $\Theta(\Psi^i(I))$, we shall \emph{delete} the $i$-th 1 in \eqref{long-1-sequence} and \emph{add} the $(n+i)$-th 1 in \eqref{long-1-sequence} to form the $i$-th 1$-$ sequence for $I$.

Deleting the $i$-th 1  decreases the number of ``10"s by one if and only if the $i$-th 1 has the form 1$-$ in \eqref{long-1-sequence}. This is measured precisely by the value $Q_I(i)$ defined in \eqref{Q-I}. Similarly, adding the $(n+i)$-th 1  increases the number of ``10"s by one if and only if the $(n+i)$-th 1 has the form 1$-$ in \eqref{long-1-sequence}. This is measured precisely by the value $P_I(i)$ defined in \eqref{P-I}. Now \eqref{antichain-one-step-P-Q} follows.
\end{proof}
\begin{rmk}
The proof above tells us that $P_I(i)=-Q_I(i+n)$.
\end{rmk}

Now \eqref{antichain-size-P-Q} follows directly from \eqref{antichain-one-step-P-Q} and Lemma \ref{lemma-key} is established.

At the end of this section, let us briefly compare our approach to Theorem A with that of Propp and Roby \cite{PR}. On one hand, Propp and Roby cleverly associate a Stanley-Thomas word to each antichain of $[m]\times [n]$ and then $\Psi$ becomes $C_R$---the rightward cyclic shift. The equivariance of the bijection is proved in Proposition 26 of \cite{PR}. Then Theorem A follows quickly.

 On the other hand, we naively associate a binary word to each antichain of $[m]\times [n]$ accroding to the direction (which is either NE or SW) of the path cutting out the corresponding ideal. Then $\Psi$ becomes $\psi$ and the equvariance quickly follows (see Lemma \ref{lemma-Pan-binary-word}). However, compared with $C_R$, the map $\psi$ lacks an easy interpretation. Instead, we separate the zeros and ones in the binary word, and use the Join-Separate rule (which comes from the map $\psi$) to move them forward. In this way, we obtain the long $0-$sequence and the long $1-$sequence. This process is analyzed in Lemma \ref{lemma-binary-word-pattern}. Then we do some counting using the functions $P_I(i)$ and $Q_I(i)$, and arrive at Theorem A.

\section{Type I orbits in $[m]\times K_{n-1}$}
This section aims to consider type I orbits in $[m]\times K_{n-1}$. Note that any full rank ideal of $[m]\times K_{n-1}$ must have the form
\begin{equation}
(L_{2n-1}^{n_0}, L_{i_1}^{n_1}, \dots, L_{i_s}^{n_s})
\end{equation}
where $0\leq i_s<\cdots <i_1<2n-1$, $n_0\in \bbN$, $n_i\in\mathbb{P}$ ($1\leq i\leq s$) are such that $\sum_{i=0}^{s} n_i =m$ (see Lemma \ref{lemma-operator-ideals-CmP}).
By using NE-SW paths, one sees that these ideals are in bijection with $B(m, 2n-1)$---all the binary words with $m$ 0's and $2n-1$ 1's. We still denote this bijection by $\Theta$.
Recall from \eqref{psi} that there is a map $\psi: B(m, 2n-1)\to B(m, 2n-1)$. Similar to Lemma \ref{lemma-Pan-binary-word}, we have the following.

\begin{lemma}\label{lemma-Pan-binary-word-D-I}
Let $I$ be a full rank ideal of $[m]\times K_{n-1}$. Then $\Theta(\Psi(I))=\psi(\Theta(I))$.
\end{lemma}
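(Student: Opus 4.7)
The plan is to reduce this lemma directly to Lemma~\ref{lemma-Pan-binary-word} by observing that, on full rank ideals, the situation for $[m]\times K_{n-1}$ is structurally identical to that of $[m]\times [2n-1]$. Indeed, $K_{n-1}$ has maximum rank $d=2n-1$, and its chain of rank ideals $L_0\subsetneq L_1\subsetneq\cdots\subsetneq L_{2n-1}$ yields a bijection $\Phi$ between full rank ideals of $[m]\times K_{n-1}$ and ideals of $[m]\times [2n-1]$: a tuple $(L_{i_1},\dots,L_{i_m})$ with $i_1\geq\cdots\geq i_m$ in $\{0,1,\dots,2n-1\}$ is sent to the ideal of $[m]\times [2n-1]$ whose $j$-th column is the rank ideal $\{1,\dots,i_j\}$. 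Since the NE-SW path separating an ideal from its complement only records the sequence of rank indices, we have $\Theta(I)=\Theta(\Phi(I))$ for every full rank $I$, where the right-hand $\Theta$ refers to the bijection $J([m]\times [2n-1])\to B(m,2n-1)$.

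Next, I would verify that $\Phi$ intertwines the two reverse operators. Applying Lemma~\ref{lemma-operator-ideals-CmP} once with $P=K_{n-1}$ and once with $P=[2n-1]$, both with maximum rank $d=2n-1$, yields the identical formula~\eqref{rank-level}. Hence $\Phi\circ\Psi_{[m]\times K_{n-1}}=\Psi_{[m]\times [2n-1]}\circ\Phi$ on full rank ideals; in particular, the $\Psi$-image of a full rank ideal remains full rank, so the iteration is well defined.

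Combining these two steps with Lemma~\ref{lemma-Pan-binary-word} applied with $n$ replaced by $2n-1$, we obtain
\[\Theta(\Psi_{[m]\times K_{n-1}}(I))=\Theta(\Psi_{[m]\times [2n-1]}(\Phi(I)))=\psi(\Theta(\Phi(I)))=\psi(\Theta(I))\]
for any full rank ideal $I$, as required. The main obstacle, if any, is purely bookkeeping: one must confirm that the branching of $K_{n-1}$ at rank $n$, i.e., the antichain $\{n,n^{\prime}\}$, is genuinely invisible to $\Psi_{[m]\times K_{n-1}}$ as long as no component $I_n$ or $I_{n^{\prime}}$ appears among the entries of the ideal, which is precisely the full rank condition. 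Since Lemma~\ref{lemma-operator-ideals-CmP} has already been established for general graded posets, no fresh combinatorics is required.
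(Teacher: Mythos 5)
Your proposal is correct and is essentially the argument the paper intends: the paper offers no proof beyond the remark ``Similar to Lemma~\ref{lemma-Pan-binary-word}'', and your reduction---identifying full rank ideals of $[m]\times K_{n-1}$ with ideals of $[m]\times[2n-1]$ via the chain of rank ideals, checking that Lemma~\ref{lemma-operator-ideals-CmP} gives the identical formula \eqref{rank-level} for both posets so that the identification intertwines the two reverse operators, and then invoking Lemma~\ref{lemma-Pan-binary-word}---is exactly the formalization of that similarity. No gaps.
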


\begin{lemma}\label{lemma-antichain-size-binary-word-D-I}
Let $I$ be a full rank ideal of $[m]\times K_{n-1}$. Then $|\Gamma(I)|$ equals to the times that ``10" occurs in $\Theta(I)$ plus $\epsilon_n(\Theta(I))$, where $\epsilon_n(\Theta(I))$ equals one if the $n$-th 1 in $\Theta(I)$ is followed immediately by 0, and it equals zero otherwise.
\end{lemma}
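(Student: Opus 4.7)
The plan is to compute $|\Gamma(I)|$ directly from the data $(i_1,\dots,i_m)$ of the full rank ideal $I=(L_{i_1},\dots,L_{i_m})$, where $i_1\geq\cdots\geq i_m$ (set $i_{m+1}:=0$), and then to read the answer off $\Theta(I)$ by combining Lemma~\ref{lemma-antichain-size-binary-word} with the special structure of $K_{n-1}$. First I would observe that $(j,p)\in I$ is maximal iff $p\in\max(L_{i_j})=P_{i_j}$ and $p\notin L_{i_{j+1}}$, which amounts to $i_j>i_{j+1}$, giving
\begin{equation*}
|\Gamma(I)|=\sum_{\substack{1\le j\le m\\ i_j>i_{j+1}}}|P_{i_j}|.
\end{equation*}
In $K_{n-1}$ every rank level is a singleton except $P_n=\{n,n'\}$, so separating the contribution of the (at most one) column $j$ with $i_j=n>i_{j+1}$ yields
\begin{equation*}
|\Gamma(I)|=\#\{j:i_j>i_{j+1}\}+\epsilon,
\end{equation*}
where $\epsilon\in\{0,1\}$ equals $1$ iff such a $j$ exists.

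Next I would match the first summand with the number of ``10''s in $\Theta(I)$. A full rank ideal of $[m]\times K_{n-1}$ and an ideal of $[m]\times[2n-1]$ are encoded by the same staircase data $(i_1,\dots,i_m)$, and the NE-SW bijection $\Theta$ used in this section is literally the one from Section~3 applied to $[m]\times[2n-1]$; hence Lemma~\ref{lemma-antichain-size-binary-word} applies verbatim and delivers the first summand.

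Finally I would identify $\epsilon$ with $\epsilon_n(\Theta(I))$. In the NE-SW encoding, the $2n-1$ ones in $\Theta(I)$ record the rank levels of the underlying chain in their natural order, so the $k$-th 1 corresponds to rank $k$; since there are exactly $2n-1$ ones, ``the $n$-th 1'' is unambiguous (it is simultaneously the $n$-th from the left and from the right) and corresponds to rank $n$, the doubled rank of $K_{n-1}$. A ``10'' appearing at that 1 records precisely the existence of a column $j$ with $i_j=n>i_{j+1}$, so $\epsilon=\epsilon_n(\Theta(I))$. The only nontrivial step will be this rank-to-1 correspondence; I would establish it either by a direct inspection of the path construction or by a short induction on the staircase shape using the Join-Separate rule of Section~3, watching out for off-by-one errors.
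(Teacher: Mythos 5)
Your proposal is correct and follows essentially the same route as the paper: the paper's (one-sentence) proof likewise reduces to the observation that the ``10'' count is the antichain size one would get if every rank level were a singleton (i.e.\ Lemma~\ref{lemma-antichain-size-binary-word} applied to $[m]\times[2n-1]$), and that the only correction comes from $L_n$, whose maximal level $\{n,n'\}$ has two elements, which is detected exactly by the $n$-th 1 being followed by a 0. Your version just makes explicit the intermediate formula $|\Gamma(I)|=\sum_{i_j>i_{j+1}}|P_{i_j}|$ and the rank-to-1 correspondence, which the paper leaves implicit.
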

\begin{proof}
Note that $\epsilon_n(\Theta(I))=1$ if and only if $L_n$ occurs at some place of $I$, and that the antichain of $K_{n-1}$ corresponding to $L_n=\{1,\cdots, n, n^\prime\}$ contains two elements.
\end{proof}

\begin{prop}\label{prop-D-I}
The average size of antichains in any type I orbit
of $[m]\times K_{n-1}$ equals $\frac{2mn}{m+2n-1}$.
\end{prop}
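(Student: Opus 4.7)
The plan is to reduce Proposition~\ref{prop-D-I} to Theorem~A plus a single combinatorial identity. By Lemma~\ref{lemma-Pan-binary-word-D-I}, $\Theta$ gives a $\Psi$-equivariant bijection from the full-rank ideals of $[m]\times K_{n-1}$ (i.e., the type I orbits) onto $B(m,2n-1)$, and by Lemma~\ref{lemma-Pan-binary-word} the $\psi$-orbits on $B(m,2n-1)$ also coincide with the $\Psi$-orbits on $[m]\times[2n-1]$. Lemma~\ref{lemma-antichain-size-binary-word-D-I} says that $|\Gamma(I)|$ equals the number of ``$10$'' substrings in $\Theta(I)$ plus $\epsilon_n(\Theta(I))$, and Proposition~\ref{prop-antichain-A} applied to $[m]\times[2n-1]$ (the other three cases of \eqref{Theta-I} being handled as in the proof of Theorem~A) sums the first term to $m(2n-1)$ over one period of length $m+2n-1$. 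Thus the proposition will follow once one proves
\begin{equation}\label{target-epsilon}
\sum_{i=1}^{m+2n-1}\epsilon_n\big(\Theta(\Psi^i(I))\big)=m;
\end{equation}
indeed, the total sum of antichain sizes over one period is then $m(2n-1)+m=2mn$, and dividing by the orbit length (which divides $m+2n-1$ by Remark~\ref{rmk-lemma-binary-word-pattern}) yields the desired average $\frac{2mn}{m+2n-1}$.

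To prove \eqref{target-epsilon} I would invoke the long $1-$ sequence of Section~4 attached to the ideal of $[m]\times[2n-1]$ whose binary word is $\Theta(I)$. Since the $i$-th $1-$ sequence consists of $2n-1$ ones, its $n$-th ``$1$'' from the left is the middle one, which also coincides with its $n$-th ``$1$'' in reading order, hence with the $(i+n)$-th ``$1$'' in reading order of the long $1-$ sequence; equivalently, it is the ``$1$'' at position $m+3n-1-i$ from the left in written form. By Lemmas~\ref{lemma-antichain-size-binary-word-D-I} and \ref{lemma-binary-word-pattern}, $\epsilon_n\big(\Theta(\Psi^i(I))\big)=1$ precisely when this ``$1$'' is immediately followed by a ``$-$'' in the long $1-$ sequence. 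As $i$ runs through $[1,m+2n-1]$, these indices cover the window $[n,m+3n-2]$ bijectively, and so the left-hand side of \eqref{target-epsilon} equals the number of ``$1$''s at written positions $j\in[n,m+3n-2]$ of the long $1-$ sequence that are followed by ``$-$''.

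The count is then a direct inspection of \eqref{long-1-sequence}, split into three segments. The initial $b$-block part occupies $1$-positions $[1,2n-1]$; its ``$1$''s followed by ``$-$'' are the last one in each of the first $k-1$ groups, sitting at positions $(2n-1)-b_r$ for $r\in[1,k-1]$, and those inside $[n,2n-1]$ are precisely those with $b_r\leq n-1$. The middle $a$-block part occupies $1$-positions $[2n,m+2n-1]\subseteq[n,m+3n-2]$ and contributes all $m-k+1$ of its ``$1$''s followed by ``$-$''. The final $b$-block part occupies $1$-positions $[m+2n,m+4n-2]$; by the same analysis, its ``$1$''s followed by ``$-$'' sit at $(m+4n-2)-b_r$ for $r\in[1,k-1]$, and those in $[m+2n,m+3n-2]$ are exactly those with $n\leq b_r\leq 2n-2$. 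Since $\{b_1,\dots,b_{k-1}\}\subseteq[1,2n-2]$, the conditions $b_r\leq n-1$ and $n\leq b_r\leq 2n-2$ partition $[1,k-1]$, yielding a total of $(k-1)+(m-k+1)=m$, which is \eqref{target-epsilon}. The main technical hurdle is the careful bookkeeping between reading-order indices and left-to-right positions in the long $1-$ sequence, together with adapting the argument to the three boundary cases of \eqref{Theta-I} (when $\Theta(I)$ does not simultaneously start with $0$ and end with $1$), exactly as at the end of the proof of Theorem~A.
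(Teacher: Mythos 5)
Your proposal is correct and follows essentially the same route as the paper: reduce to $[m]\times[2n-1]$ via Lemmas \ref{lemma-Pan-binary-word-D-I} and \ref{lemma-antichain-size-binary-word-D-I}, invoke Proposition \ref{prop-antichain-A} for the ``10''-count, and then show that exactly $m$ of the $\epsilon_n$-terms equal $1$ per period by locating the $1$'s followed by ``$-$'' in the long $1-$ sequence. The only (immaterial) difference is that you count written positions in $[n,m+3n-2]$ from the left while the paper counts reading-order positions, so your tally splits as $(k-1)+(m-k+1)$ instead of the paper's $k+(m-k)$; both give $m$.
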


\begin{proof}
In view of Lemmas \ref{lemma-Pan-binary-word-D-I} and \ref{lemma-antichain-size-binary-word-D-I}, we can work on $[m]\times [2n-1]$, with the exception that whenever the $n$-th 1 is followed immediately by 0, we should add one to the antichain size.  Therefore, let us focus on the long-1-sequence \eqref{long-1-sequence} with $a_k=m$, while $b_k=2n-1$ instead. Recall that we always count the long-1-sequence from right to left. In view of Proposition \ref{prop-antichain-A}, it remains to prove that there are exactly $m$ 1's which are preceded by ``$-$" in a period of \eqref{long-1-sequence}.  It is easy to observe that the indices of these 1's are as follows:
\begin{align*}
&b_1+1,\dots, b_k+1, [b_k+2,b_k+a_1], [b_k+a_1+2, b_k+a_2], \dots, [b_k+a_{k-1}+2, b_k+a_k],\\
&b_k+a_k+b_1+1,\dots,  b_k+a_k+b_k+1, \dots
\end{align*}
To focus on a period, let us count those living on the interval $[n, m+3n-2]$. The total number is
\begin{align*}
&\#\{j\in[k]\mid b_j+1\geq n\} +(a_k-k)+\#\{j\in[k]\mid b_k+a_k+b_j+1\leq m+3n-2\}\\
&=(m-k)+\#\{j\in[k]\mid b_j\geq n-1\}+\#\{j\in[k]\mid b_j\leq n-2\}\\
&=(m-k)+k=m,
\end{align*}
as desired.
\end{proof}

\section{Type II orbits in $[m]\times K_{n-1}$}
This section aims to consider type II orbits in $[m]\times K_{n-1}$.
Recall that any non-full-rank ideal of $[m]\times K_{n-1}$ either has the form \eqref{non-full-rank-1} or the form \eqref{non-full-rank-2}, and that we have introduced the duality ``$\sim$" between these ideals and type II orbits at the end of Section 2. It is obvious that two dual non-full-rank ideals have the same size, and their anticains have the same size as well. Thus it does no harm to study non-full-rank ideals and their orbits up to duality.

We let $L_*$ to be $I_n$ or $I_{n^\prime}$, and view the subscript * as a number between $n-1$ and $n$. Interpreted in this way,  one sees that $L_*$ behaves like a rank ideal by comparing Lemmas \ref{lemma-operator-ideals-CmP} and \ref{lemma-operator-ideals-CmK}. More precisely, the two cases in Lemma \ref{lemma-operator-ideals-CmK} can be written uniformly as
$$
\cdots L_{\max\{*,\, j_1+1\}}^{n_s} L_{\min\{*,\, j_1+1\}}^{m_0}  \cdots.
$$
Here we only present the middle part. Thinking in this way allows us to use a NE-SW path to represent a non-full-rank ideal of $[m]\times K_{n-1}$ up to duality: Instead of $[m]\times [2n-1]$, now we use $[m]\times [2n]$, with the rank * inserted between rank $n-1$ and rank $n$. See  Fig.~4 for an example.

\begin{figure}
\centering \scalebox{0.6}{\includegraphics{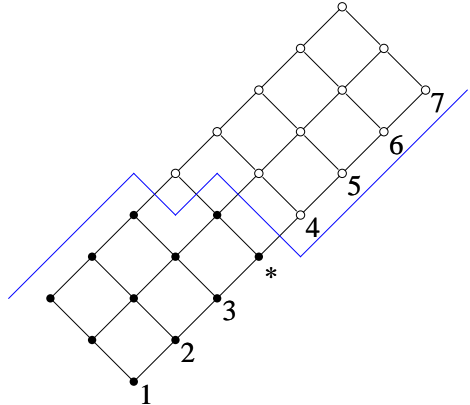}}
\caption{The NE-SW path for the ideal $(I_4^2, L_3)$ in $[3]\times K_3$}
\end{figure}

Let $\overline{B}(m, 2n)$ be the set of binary words with $m$ 0's  and $2n$ 1's, and that the $n$-th 1 is followed immediately by 0. Since the integer $m_0$ in \eqref{non-full-rank-1} or  \eqref{non-full-rank-2} is always positive,  the process gives a bijection
\begin{equation}\label{Theta-D-II}
\Theta: \{\mbox{non-full-rank ideals of } [m]\times K_{n-1}\}/\sim \quad \longrightarrow \quad \overline{B}(m, 2n).
\end{equation}
For instance, the equivalence class of \eqref{non-full-rank-1} is mapped to the following binary word:
$$
1^{j_t}0^{m_t}1^{j_{t-1}-j_t}0^{m_{t-1}}\cdots 1^{j_1-j_2}0^{m_1}1^{n-j_1}0^{m_0}1^{i_s+1-n}0^{n_s}\cdots 1^{i_1-i_2}0^{n_1}1^{2n-1}0^{n_0}.
$$

A general binary word $w$ in $\overline{B}(m, 2n)$ has the form
\begin{equation}\label{binary-word-D-II}
w=1^{a_1}0^{b_1}\dots
 1^{a_i} 0^{b_i}\dots 1^{a_s} 0^{b_s},
\end{equation}
where $s\geq 2$, each $a_i$, $b_i$ is positive except for that  $a_1$ and $b_s$ may be $0$.  By the definition of $\overline{B}(m, 2n)$, there must exist an index $i$ such that $a_1+\cdots+a_i=n$.
We define
\begin{equation}\label{psi-D-II}
\overline{\psi}: \overline{B}(m, 2n) \to \overline{B}(m, 2n)
\end{equation} via
$$
\overline{\psi}(w)=\begin{cases}
0^{b_1-1} 1^{a_1} 0^{b_2+1} 1^{a_2} & \mbox { if } i=1 \mbox{ and }s=2; \\
0^{b_1-1} 1^{a_1} 0^{b_2} 1^{a_2+1} \cdots 0^{b_s+1} 1^{a_s-1}& \mbox { if } i=1 \mbox{ and }s>2; \\
0^{b_1-1}1^{a_1+1}\cdots
 0^{b_i} 1^{a_i}\cdots 0^{b_s+1}1^{a_s-1} & \mbox { if } 1<i \mbox{ and } a_i=1;\\
0^{b_1-1}1^{a_1+1}\cdots
 0^{b_i} 1^{a_i-1}0^{b_{i+1}} 1^{a_{i+1}+1}\cdots 0^{b_s+1}1^{a_s-1} & \mbox { if } 1<i<s-1 \mbox{ and } a_i>1;\\
0^{b_1-1}1^{a_1+1}\cdots
 0^{b_i} 1^{a_i-1}0^{b_s+1}1^{a_s} & \mbox { if } 1<i=s-1 \mbox{ and } a_i>1.
\end{cases}
$$

The following result is a translation of Lemma \ref{lemma-operator-ideals-CmK} into the language of binary words. This time we omit the details.

\begin{lemma}\label{lemma-Pan-binary-word-D-II}
Let $I$ be a non-full-rank ideal of $[m]\times K_{n-1}$. Then
$$
\Theta(\Psi(I))=\overline{\psi}(\Theta(I)).
$$
\end{lemma}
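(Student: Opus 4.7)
The plan mirrors the proof of Lemma \ref{lemma-Pan-binary-word}: start with a non-full-rank ideal $I$ written in the form \eqref{non-full-rank-1} or \eqref{non-full-rank-2}, read off its binary word $\Theta(I)$, apply Lemma \ref{lemma-operator-ideals-CmK} to compute $\Psi(I)$, read off $\Theta(\Psi(I))$, and check that it matches $\overline{\psi}(\Theta(I))$ block by block. Since $\Theta$ is defined on equivalence classes under the duality $\sim$, we may assume $I$ has the form \eqref{non-full-rank-1}, and its image under $\Theta$ is the word displayed just after the bijection \eqref{Theta-D-II}. Writing this word in the canonical form \eqref{binary-word-D-II}, the index $i$ satisfying $a_1+\cdots+a_i=n$ is the unique index whose $1$-block $1^{n-j_1}$ sits immediately to the left of the $0$-block $0^{m_0}$ that encodes $I_n^{m_0}$ (with the convention $j_1:=0$ when $t=0$).

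The first step is to handle the generic situation, corresponding to the third clause of \eqref{psi-D-II}: $t\geq 1$, $s\geq 1$, $j_1\leq n-2$, and $n-j_1\geq 2$. By the first branch of Lemma \ref{lemma-operator-ideals-CmK}, each $L_{i_r}$- and each $L_{j_r}$-subscript is shifted up by one, $I_n^{m_0}$ becomes $I_{n^\prime}^{m_0}$ (which coincides with $I_n^{m_0}$ modulo $\sim$), one prepends $L_{i_1+1}^{n_0+1}$ and one appends $L_0^{m_t-1}$. Translating each of these shifts back via $\Theta$, the $a_r$ and $b_r$ are preserved except for $(a_1,b_1,a_s,b_s)\mapsto(a_1+1,b_1-1,a_s-1,b_s+1)$, which is precisely the third formula in \eqref{psi-D-II}.

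The four degenerate cases should then be treated one by one. The condition $a_i=1$ in \eqref{psi-D-II} forces $j_1=n-1$, which triggers the second branch of Lemma \ref{lemma-operator-ideals-CmK}: the single $1$ forming the $i$-th $1$-block gets glued to its left neighbour, matching the fourth formula. The condition $i=1$ forces $t=0$, so $I_n^{m_0}$ sits at the very end of $I$; extending Lemma \ref{lemma-operator-ideals-CmK} by continuity to $t=0$ produces a trailing $L_0^{m_0-1}$ and no shift on the (empty) $L_{j_r}$-side, which on the word side appends zeros at the tail and trims one zero from the leading $0$-block, giving the first or second formula of \eqref{psi-D-II} depending on whether $s=0$ or $s\geq 1$ in Lemma \ref{lemma-operator-ideals-CmK}. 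The fifth formula ($1<i=s-1$ with $a_i>1$) is the mirror-image degeneracy where $s=0$ in Lemma \ref{lemma-operator-ideals-CmK}, so $a_s=2n-1$ and the shift $a_s\mapsto a_s-1$ of the generic case is absorbed because there is no intermediate $L_{i_r}$-block.

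The main obstacle will be the bookkeeping across the various boundary regimes ($n_0=0$, $s=0$, $t=0$, $m_t=1$) and the need to extend Lemma \ref{lemma-operator-ideals-CmK} by continuity to $t=0$ and $s=0$; in each such regime certain blocks shrink to length zero or disappear, and one must verify that both $\Theta(\Psi(I))$ and $\overline{\psi}(\Theta(I))$ absorb these degeneracies consistently. Once this exhaustive verification is done, the identity holds block for block in every one of the five clauses defining $\overline{\psi}$.
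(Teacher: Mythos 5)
Your overall strategy---reading off $\Theta(\Psi(I))$ from the two branches of Lemma \ref{lemma-operator-ideals-CmK} and comparing block lengths against the five clauses of \eqref{psi-D-II}---is exactly the verification the paper has in mind and omits. But your matching of branches to clauses is backwards, and this is not a labelling slip: carried out as written, the check would fail at the central step.

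Write $\Theta(I)$ as in \eqref{binary-word-D-II}, so that $a_i=n-j_1$ and $b_i=m_0$. In the first branch of Lemma \ref{lemma-operator-ideals-CmK} (the case $j_1<n-1$, i.e.\ $a_i\geq 2$), the component $I_n^{m_0}$ does \emph{not} move up a rank level: it becomes $I_{n'}^{n_s}$, which still sits at height $n$ of the auxiliary chain $[2n]$, whereas the group of multiplicity $n_s$ that the plain $\psi$ of \eqref{psi} would place at height $n+1$ lands at height $n$ instead. On the word side this is precisely the transfer $1^{a_i}\mapsto 1^{a_i-1}$, $1^{a_{i+1}}\mapsto 1^{a_{i+1}+1}$ in the fourth and fifth clauses of \eqref{psi-D-II}; it is the whole reason $\overline{\psi}$ differs from $\psi$. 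So your claim that in this branch ``the $a_r$ and $b_r$ are preserved except for $(a_1,b_1,a_s,b_s)$'' and that one lands in the third clause is false. Conversely, the second branch ($j_1=n-1$, i.e.\ $a_i=1$) is the one that acts on $\Theta(I)$ exactly as the plain $\psi$, i.e.\ the \emph{third} clause: the single $1$ of the $i$-th block survives as its own block between the shifted $0^{m_0}$ and $0^{n_s}$; it is not ``glued to its left neighbour,'' and it does not produce the fourth formula. (A symptom of the same confusion: the multiplicities rotate under $\Psi$, so $I_n^{m_0}$ becomes $I_{n'}^{n_s}$, not $I_{n'}^{m_0}$.) The correct dictionary is: second branch $\leftrightarrow$ third clause; first branch with $t\geq 1$ $\leftrightarrow$ fourth and fifth clauses; $t=0$ $\leftrightarrow$ first and second clauses. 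Until the two generic cases are redone with this correspondence, the proof is not established; the boundary bookkeeping you defer ($n_0=0$, $s=0$, $m_t=1$, etc.) is by comparison routine.
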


Note that although there are five cases in the definition of \eqref{psi-D-II}, the pattern of 0's is always the same as that of \eqref{psi}.  Therefore,
guided by Lemma \ref{lemma-Pan-binary-word-D-II}, the
\textbf{Join-Separate rule} always applies:
\begin{itemize}
\item[$\bullet$]
 if $\Theta(I)$ ends with 1, separate the $(m+1)$-th 0 from the $m$-th 0 by inserting a ``$-$" between them;
otherwise, join the $(m+1)$-th 0  from \emph{right} with the $m$-th
0 of $\Theta(I)$.
\end{itemize}
As in Section 4, the long-0-sequence can be formed by the above Join-Separate rule, and it records the zero patterns in $\Theta(\Psi^i(I))$ for $i\geq 0$.

Given any binary word $w$ in $\overline{B}(m, 2n)$, the $n$-th 1 always indicates the occurrence of $L_*$ in the ideal $I$ of $[m]\times K_{n}$ corresponding to $w$. Replacing the $n$-th 1 in $w$ by $*$ establishes a bijection between $\overline{B}(m, 2n)$ and $B_*(m, 2n-1)$. Here $B_*(m, 2n-1)$ consists of binary words with $m$ 0's, $2n-1$ 1's and a ``*"; moreover, we require that there are $n-1$ 1's before *, and * is followed immediately by 0. For instance, the element in $B_*(m, 2n-1)$ corresponding to the binary word \eqref{binary-word-D-II} in $\overline{B}(m, 2n)$ is
\begin{equation}
1^{a_1}0^{b_1}\dots
 1^{a_i-1}* 0^{b_i}\dots 1^{a_s} 0^{b_s},
\end{equation}
where $1^{a_i-1}$ is viewed as the empty word if $a_i=1$. Separating 1's and the * from the above equation, we get  its \textbf{$1-*$ pattern}:
\begin{equation}\label{1-*-pattern}
1^{a_1}-\dots-
 1^{a_i-1}* -\dots -1^{a_s}.
\end{equation}
Here we omit the last possible $-$. Moving from $\Theta(I)$ to $\Theta(\Psi(I))$, it is direct to check via Lemma \ref{lemma-Pan-binary-word-D-II} that the $1-*$ pattern becomes
\begin{equation}\label{p-1}
p(1^{a_1+1}-\cdots-1^{a_i-1}*-\cdots-1^{a_s-1})
\end{equation}
if $a_1\geq 1$, and it becomes
\begin{equation}\label{p-2}
p(1-1^{a_1}-\cdots-1^{a_i-1}*-\cdots-1^{a_s-1})
\end{equation}
if $a_1=0$. Here in both cases, $-1^{a_s-1}$ is interpreted as the empty word if $a_s=1$. Moreover, the operator $p$ firstly exchanges * and the $n$-th 1, then it moves  the 1 immediately following * (if there exists) to the next segment. For instance, when $n=4$, we have
$$
p(1-111*-11-1)=1-11*-111-1, \quad p(111-1-*-111)=111-*-1-111.
$$
Therefore, after adding a new 1, the operator $p$ \emph{pushes} the $n$-th 1 through the *. Note that this process does not change the relative patterns of the 1's preceding *. For instance, after adding $n-1$ 1's to \eqref{1-*-pattern} one by one, in the form $1$ or $1-$, and applying $p$ once after adding a 1, then the $n-1$ 1's before * will be pushed after *. However, their relative pattern is still
$$
1^{a_1}-\dots-
 1^{a_i-1}.
$$
Therefore, we should pay attention to how the new 1's are added to \eqref{1-*-pattern}. By the analysis around \eqref{p-1} and \eqref{p-2}, this process obeys the
\textbf{Join-Separate rule}:
\begin{itemize}
\item[$\bullet$]
 if $\Theta(I)$ starts with 0, add the new 1 from left in the form ``$1-$";
otherwise, add it from left in the form ``$1$".
\end{itemize}
Similar to Section 4, we can form the long-1-sequence by consecutively applying the above rule. However, we emphasize that unlike the $[m]\times [n]$ situation, now the long-1-sequence no longer aims to record the ones pattern in $\Theta(\Psi^i(I))$ for $i\geq 0$. Instead, it simply records how the new ones are added in.

The following lemma is immediate.
\begin{lemma}\label{lemma-antichain-size-binary-word-D-II}
Let $I$ be a non-full-rank ideal of $[m]\times K_{n-1}$. Then $|\Gamma(I)|$ equals to the times that ``10" occurs in $\Theta(I)$.
\end{lemma}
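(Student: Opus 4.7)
The plan is to show that each occurrence of ``10" in $\Theta(I)$ corresponds to exactly one maximal element of $I$, in parallel with Lemmas \ref{lemma-antichain-size-binary-word} and \ref{lemma-antichain-size-binary-word-D-I}. The key difference from the full-rank case is that even when $L_n$ appears among the columns of $I$, the structural constraints in \eqref{non-full-rank-1} and \eqref{non-full-rank-2} force its contribution to $|\Gamma(I)|$ to be $1$ rather than $2$, so the correction term $\epsilon_n$ appearing in Lemma \ref{lemma-antichain-size-binary-word-D-I} disappears in the non-full-rank setting.

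First I would recall that for any ideal $(I_1,\dots,I_m)$ of $[m]\times K_{n-1}$, with the convention $I_{m+1}:=\emptyset$, the antichain $\Gamma(I)$ consists precisely of the pairs $(k,p)$ with $p\in \Gamma(I_k)\setminus I_{k+1}$. A short case analysis then yields the column-by-column contribution to $|\Gamma(I)|$: when $I_k=I_{k+1}$ the contribution is $0$; when $I_k=L_j$ with $j\neq n$ and $I_{k+1}$ has strictly smaller level, $\Gamma(L_j)=\{j\}$ contributes $1$; when $I_k\in\{I_n,I_{n^\prime}\}$ is followed by a strictly smaller column, the singleton $\Gamma(I_k)$ contributes $1$. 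The delicate case is $I_k=L_n$: a priori $\Gamma(L_n)=\{n,n^\prime\}$ could contribute $2$, but in the non-full-rank ideals \eqref{non-full-rank-1} and \eqref{non-full-rank-2} the block $L_n^{n_s}$ can only appear when $i_s=n$, in which case it is immediately followed by the $I_n^{m_0}$ (respectively $I_{n^\prime}^{m_0}$) block. Exactly one of $n$ or $n^\prime$ lies in $I_{k+1}$, so the contribution is still $1$. Consequently every strict transition between consecutive columns of $I$ contributes exactly $1$, including the final transition $I_m\to\emptyset$.

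I would then match these transitions with the ``10"s in $\Theta(I)$. Reading off the explicit formula for $\Theta(I)$ displayed right after \eqref{Theta-D-II}, the binary word decomposes into alternating runs of $1$'s and $0$'s whose boundaries align precisely with the block-boundaries of $(I_1,\dots,I_m)$ viewed as a sequence of maximal runs of equal columns. Each ``10" records the moment where a $1$-run ends and a $0$-run begins, which is exactly a downward block transition. A direct count (handling separately the case $n_0=0$, when the trailing $0^{n_0}$ is empty) confirms that the total number of ``10"s equals the number of strict column transitions, matching the count established above.

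The main subtlety I expect to handle is verifying the $L_n$ case: one must check from the inequalities $0\le j_t<\cdots<j_1<n\le i_s<\cdots<i_1<2n-1$ in \eqref{non-full-rank-1} (and similarly for \eqref{non-full-rank-2}) that a block $L_n$ can appear only immediately above an $I_*$ block, so the pattern $L_n\to L_j$ with $j<n$ never occurs in a non-full-rank ideal. Once this is in place, the bijection between ``10"s in $\Theta(I)$ and maximal elements of $I$ is immediate, and the lemma follows.
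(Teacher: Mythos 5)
Your proposal is correct, and it is essentially the direct verification that the paper leaves unproved (the lemma is simply declared ``immediate''). In particular, you correctly isolate and resolve the one genuinely non-obvious point: in a non-full-rank ideal of the form \eqref{non-full-rank-1} or \eqref{non-full-rank-2} an $L_n$ block can only occur with $i_s=n$, hence is immediately followed by the $I_n^{m_0}$ (resp.\ $I_{n^\prime}^{m_0}$) block with $m_0\geq 1$, so $\Gamma(L_n)=\{n,n^\prime\}$ contributes only $1$ to $|\Gamma(I)|$ and the correction term $\epsilon_n$ of Lemma \ref{lemma-antichain-size-binary-word-D-I} does not appear; the matching of strict column transitions with occurrences of ``10'' in the explicit word following \eqref{Theta-D-II}, including the boundary cases $n_0=0$ and $j_t=0$, then goes through exactly as you describe.
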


\begin{lemma}\label{lemma-long-zero-sequence-D-II}
Let $I$ be the non-full-rank ideal of  $[m]\times K_{n-1}$ corresponding to the binary word
\begin{equation}\label{D-II-key-lemma}
0^{a_1} 1^{b_1}0^{a_2}1^{b_2}\cdots 0^{a_s} 1^{b_s},
\end{equation}
where $\sum_{i=1}^s a_i=m$, $\sum_{i=1}^s b_i=2n$, $a_1\geq 1$, $b_1\geq 1$ and $\sum_{i=1}^k b_i=n$. Then the long-0-sequence associated to $I$ is
\begin{equation}\label{long-0-sequence-D-II-key-lemma}
0^{a_1}-0^{a_2}-\cdots-0^{a_s}-\underbrace{0-0}_{b_s}
\cdots \underbrace{0-0}_{b_{k+2}}\underbrace{0-0}_{b_{k+1}}(-0)^{b_k-1}\underbrace{0-0}_{b_{k-1}} \cdots  \underbrace{0-0}_{b_1}   0^{a_1}-0^{a_2}-\cdots-0^{a_s}-
\end{equation}
Here $(-0)^{b_k-1}$ is interpreted as the empty word if $b_k=1$.
\end{lemma}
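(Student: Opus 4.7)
The plan is to proceed by induction on $i$, following the blueprint of Lemma \ref{lemma-binary-word-pattern}. The Join-Separate rule for zeros (stated earlier in Section~6) reduces the task to determining, for each $i\geq 1$, whether $\Theta(\Psi^{i-1}(I))$ ends with $1$ or $0$: the $(m+i)$-th zero is preceded by $-$ in the long-$0$-sequence exactly in the former case.

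For the base case, the first $m$ zeros are read off from \eqref{D-II-key-lemma} as $0^{a_1}-0^{a_2}-\cdots-0^{a_s}$. Since $b_s\geq 1$ forces $\Theta(I)$ to end with $1$, the $(m+1)$-th zero is separated and we append a trailing $-$, matching the initial segment $0^{a_1}-0^{a_2}-\cdots-0^{a_s}-$ of the claimed long-$0$-sequence. For the inductive step, we track the right end of $\Theta(\Psi^{i-1}(I))$ via its $1$-$*$ pattern, which evolves under $\Psi$ (by Lemma \ref{lemma-Pan-binary-word-D-II} and the recipe preceding Lemma \ref{lemma-antichain-size-binary-word-D-II}) through three operations: (i) prepending a new $1$ on the left (in the form ``$1$'' or ``$1-$'' per the Join-Separate rule for ones), (ii) consuming a $1$ from the right portion of the pattern, and (iii) applying $p$, which swaps $*$ with the $n$-th $1$ and shifts an adjacent $1$ past the next separator. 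As long as $*$ stays inside the left half of the pattern, $p$ does not affect the right end, and the right-end bit of $\Theta(\Psi^{i-1}(I))$ is simply determined by whether the rightmost non-$-$ symbol of the pattern is exposed or hidden behind a trailing $-$.

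The inductive argument splits naturally into three phases. In \textbf{Phase (a)}, for $b_s+b_{s-1}+\cdots+b_{k+1}$ steps, the blocks $1^{b_s},1^{b_{s-1}},\dots,1^{b_{k+1}}$ are peeled off from the right end one symbol at a time. Within each block $1^{b_j}$, the right end remains $1$ for $b_j$ steps, producing $b_j$ separated zeros $\underbrace{0-0}_{b_j}$; at the boundary between two consecutive blocks, the right end becomes $0$ for exactly one step, concatenating the next block of zeros with no inserted $-$. The cumulative effect is the segment $\underbrace{0-0}_{b_s}\cdots\underbrace{0-0}_{b_{k+1}}$. In \textbf{Phase (b)}, the right end meets the $k$-th block $1^{b_k-1}*$, and the operator $p$ intervenes: pushing $*$ past the $n$-th $1$ steals exactly one right-end step and inserts an extra separator, yielding a segment of the form $(-0)^{b_k-1}$ preceded by an extra $-$ that separates it from the preceding $\underbrace{0-0}_{b_{k+1}}$ block rather than concatenating it. In \textbf{Phase (c)}, the blocks $1^{b_{k-1}},\dots,1^{b_1}$ are depleted exactly as in Phase~(a), contributing $\underbrace{0-0}_{b_{k-1}}\cdots\underbrace{0-0}_{b_1}$. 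Once all one-symbols have cycled through, the $1$-$*$ pattern has returned to its initial configuration and the final $m$ zeros reproduce the original pattern $0^{a_1}-0^{a_2}-\cdots-0^{a_s}-$, consistent with the periodicity of $\Psi$ (period $m+2n-1$ on such a type II orbit).

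The main obstacle is the precise bookkeeping of Phase~(b): one must verify that the crossing of $*$ by the $n$-th $1$ produces exactly (i) one extra $-$ at the leading edge of the $b_k$-segment (so the junction with the preceding block is a separation rather than a concatenation), (ii) only $b_k-1$ separated zeros inside the segment, and (iii) an ordinary concatenation at the trailing boundary with $1^{b_{k-1}}$. All three features are controlled by a single application of $p$ at the critical step; tracking this swap explicitly and verifying its impact on the right-end bit over several consecutive steps constitutes the technical heart of the argument.
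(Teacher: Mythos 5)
Your proposal is correct and follows essentially the same route as the paper's proof: read off the first $m$ zeros from $\Theta(I)$, then track the evolution of the $1$-$*$ pattern under the bump-out process and the operator $p$, and apply the Join-Separate rule for zeros phase by phase — your Phases (a), (b), (c) correspond exactly to the paper's steps (A) (the blocks $1^{b_s},\dots,1^{b_{k+1}}$ after $*$), (B) (the $n-1$ ones pushed through $*$, producing $(-0)^{b_k-1}$ and the remaining blocks), and (C) (the $m$ newly added ones returning the final $0^{a_1}-\cdots-0^{a_s}-$). The delicate point you isolate — that the single crossing of $*$ accounts for the extra separator before the $b_k$-segment, the count $b_k-1$, and the concatenation at its trailing end — is exactly the content of the paper's step (B), treated there at a comparable level of detail.
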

\begin{proof}
Let us start with
\begin{equation}\label{first-m-0-D-II}
0^{a_1}-0^{a_2}-\cdots -0^{a_s}
\end{equation}
the zeros pattern of \eqref{D-II-key-lemma}.

\textbf{(A).} The $1-*$ pattern of \eqref{D-II-key-lemma} is
\begin{equation}\label{1-*-key-lemma}
1^{b_1}-1^{b_2}-\cdots-1^{b_{k-1}}-1^{b_k-1}*-1^{b_{k+1}}-\cdots -1^{b_s}.
\end{equation}
The last $n$ 1's will be bumped out one by one in the form of
$$
-1^{b_{k+1}}-\cdots -1^{b_s}.
$$
Therefore, by the Join-Separate rule, the next $n$ zeros will be added to \eqref{first-m-0-D-II} in the form
$$
-\underbrace{0-0}_{b_s}
\cdots \underbrace{0-0}_{b_{k+2}}\underbrace{0-0}_{b_{k+1}}.
$$

\textbf{(B).} Now let us handle the next $n-1$ step. As noted earlier, the $n-1$ 1's before * in \eqref{1-*-key-lemma} will be pushed through *, and they will be bumped out in the relative form
$$
-1^{b_1}-1^{b_2}-\cdots-1^{b_{k-1}}-1^{b_k-1}.
$$
Therefore, by the Join-Separate rule, the next $n-1$ zeros will be added  in the form
$$
(-0)^{b_k-1}\underbrace{0-0}_{b_{k-1}} \cdots  \underbrace{0-0}_{b_2}\underbrace{0-0}_{b_1}.
$$

\textbf{(C).} Now let us analyze the last $m$ 0's. Since the first $m$ 0's are in the form \eqref{first-m-0-D-II}, the first $m$ 1's will be added to \eqref{1-*-key-lemma} in the form
$$
\underbrace{1-1}_{a_s} \cdots \underbrace{1-1}_{a_2}\underbrace{1-1}_{a_1}-.
$$
Again they will be pushed through *, and then bumped out in the above relative form. Therefore, by the Join-Separate rule, the last $m$ 0's will be added in the form
$$
 0^{a_1}-0^{a_2}-\cdots-0^{a_s}.
$$
This finishes the proof.
\end{proof}
\begin{rmk}\label{rmk-lemma-long-zero-sequence-D-II}
In the setting of Lemma \ref{lemma-long-zero-sequence-D-II}, the binary word $\Theta(\Psi^{m+2n-1}(I))$ has the same zeros pattern as that of $\Theta(I)$, namely, \eqref{first-m-0-D-II}.
On the other hand, the long-1-sequence goes like
$$
1^{b_1}-\cdots-1^{b_k-1}-\cdots -1^{b_s}\underbrace{1-1}_{a_s} \cdots \underbrace{1-1}_{a_2}\underbrace{1-1}_{a_1}-1^{b_1}-\cdots-1^{b_k-1}*-\cdots -1^{b_s}.
$$
Since pushing a sequence of ones through * does not change their relative pattern, one sees that $\Theta(\Psi^{m+2n-1}(I))$ has the same $1-*$ pattern as that of $\Theta(I)$, namely, \eqref{1-*-key-lemma}. Thus we conclude that $\Psi^{m+2n-1}(I)=I$. Then it is immediate that $\Psi_{[m]\times K_{n-1}}$ has order $m+2n-1$. This result should already be known by Rush and Shi. Indeed, in Theorem 10.1 of \cite{RS}, the cyclic sieving phenomenon defined in \cite{RSW} has been established for $[m]\times K_{n-1}$.
\end{rmk}

\begin{prop}\label{prop-D-II}
The average size of antichains in any type II orbit
of $[m]\times K_{n-1}$ equals $\frac{2mn}{m+2n-1}$.
\end{prop}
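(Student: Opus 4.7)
The plan is to adapt the strategy of Sections 3 and 4 to the $\overline{B}(m,2n)$ setting. Since two dual non-full-rank ideals produce orbits with identical antichain-size statistics, it suffices to establish the statement on a single representative from each duality class. By Remark \ref{rmk-lemma-long-zero-sequence-D-II}, the reverse operator $\Psi_{[m]\times K_{n-1}}$ has order $m+2n-1$ on type~II orbits, so for the ideal $I$ whose binary word is given by \eqref{D-II-key-lemma} it is enough to prove
\[
\sum_{i=1}^{m+2n-1} \big|\Gamma\big(\Psi^i(I)\big)\big| = 2mn.
\]

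By Lemma \ref{lemma-antichain-size-binary-word-D-II}, each summand is the number of occurrences of ``10'' in $\Theta(\Psi^i(I))$. I would track how this count changes under a single application of $\Psi$. The Join-Separate rule for zeros (which is valid in this setting since, as noted after \eqref{psi-D-II}, the zero pattern of $\overline{\psi}(w)$ is identical to that of $\psi(w)$) removes the current leftmost 0 and inserts a new 0 on the right. A ``10'' is created exactly when the inserted 0 is immediately preceded by a 1 in $\Theta(\Psi^i(I))$, and an existing ``10'' is destroyed exactly when the removed 0 was immediately preceded by a 1 in $\Theta(\Psi^{i-1}(I))$. Both conditions can be detected by reading the dashes surrounding the relevant 0's inside the long-0-sequence \eqref{long-0-sequence-D-II-key-lemma}.

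Mirroring Lemma \ref{lemma-antichain-one-step-P-Q}, I would then introduce bookkeeping functions $P_I, Q_I : [1, m+2n-1] \to \{-1,0,1\}$ with $P_I(i)$ indicating that a new ``10'' is produced at step $i$ and $Q_I(i)$ indicating that an existing ``10'' is annihilated; both are read off \eqref{long-0-sequence-D-II-key-lemma}. The resulting one-step recursion $|\Gamma(\Psi^i(I))| = |\Gamma(\Psi^{i-1}(I))| + P_I(i) + Q_I(i)$ telescopes to
\[
\sum_{i=1}^{m+2n-1}\big|\Gamma\big(\Psi^i(I)\big)\big| = (m+2n-1)(s-1) + \sum_{i=1}^{m+2n-1}(m+2n-i)\bigl(P_I(i) + Q_I(i)\bigr),
\]
where $s-1=|\Gamma(I)|$ is the number of ``10''s in the word \eqref{D-II-key-lemma}.

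The final step is an explicit enumeration of the indices at which $P_I$ and $Q_I$ are nonzero. Inspecting \eqref{long-0-sequence-D-II-key-lemma}, these positions are completely determined by $a_1,\dots,a_s,b_1,\dots,b_s$ together with the constraints $\sum a_j = m$, $\sum b_j = 2n$ and $\sum_{j\le k} b_j = n$; an elementary computation in the spirit of Proposition \ref{prop-antichain-A} should collapse the right-hand side to $2mn$. The main obstacle, which has no counterpart in the proof of Theorem~A, is the anomalous segment $(-0)^{b_k-1}$ inside \eqref{long-0-sequence-D-II-key-lemma}, whose local shape differs from the surrounding $\underbrace{0-0}_{b_j}$ blocks. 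This segment is the combinatorial trace of the $n$-th 1 being pushed through~$*$, and handling it correctly is precisely what converts the ``type~I-style'' total of $m(2n-1)$ into the desired value $2mn$, yielding the announced average $\tfrac{2mn}{m+2n-1}$.
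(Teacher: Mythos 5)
Your setup coincides with the paper's own: reduce modulo duality to the representative \eqref{D-II-key-lemma}, invoke the order $m+2n-1$ from Remark \ref{rmk-lemma-long-zero-sequence-D-II}, and note via Lemma \ref{lemma-antichain-size-binary-word-D-II} that the orbit sum is the total number of ``$-0$''s occurring in the $m+2n-1$ successive length-$m$ windows of the long-$0$-sequence \eqref{long-0-sequence-D-II-key-lemma}. Where you diverge is the endgame. The paper does not redo the $P_I/Q_I$ telescoping at all: it inserts one extra undashed $0$ at the head of the anomalous block, turning $(-0)^{b_k-1}$ into $\underbrace{0-0}_{b_k}$, which leaves the set of dashed zeros (and their total window-incidence count) unchanged while producing exactly the long-$0$-sequence of an ideal of $[m]\times[2n]$; Proposition \ref{prop-antichain-A} then gives the total $m\cdot 2n=2mn$ with no further computation. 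You instead propose to re-derive the one-step recursion and evaluate $\sum_{i}(m+2n-i)\bigl(P_I(i)+Q_I(i)\bigr)$ directly. That route does work, and your diagnosis of the crux is correct: the block $(-0)^{b_k-1}$ packs $b_k-1$ dashed zeros into $b_k-1$ positions, so one period of \eqref{long-0-sequence-D-II-key-lemma} carries $2n$ dashed zeros rather than the $2n-1$ of the $[m]\times[2n-1]$ full-rank case, and since the first and last $m$ zeros share the pattern $0^{a_1}-\cdots-0^{a_s}-$, each dashed zero of a period accounts for exactly $m$ window-incidences, giving $m\cdot 2n=2mn$. However, you stop at ``an elementary computation should collapse the right-hand side to $2mn$'' without performing it, and that computation is the entire content of the proposition once the lemmas of Section 6 are in place; you should either carry it out explicitly along the lines just indicated, or adopt the paper's one-line comparison with $[m]\times[2n]$, which disposes of the anomalous block without any new bookkeeping.
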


\begin{proof}
Without loss of generality, let $I$ be the non-full-rank ideal of  $[m]\times K_{n-1}$ corresponding to the binary word \eqref{D-II-key-lemma}. According to Remark \ref{rmk-lemma-long-zero-sequence-D-II},  $\Psi_{[m]\times K_{n-1}}$ has order $m+2n-1$. One can consecutively cut off $m+2n-1$ segments of zeros from \eqref{long-0-sequence-D-II-key-lemma}, each having length $m$. It remains to show that the total number of $-0$'s in these $m+2n-1$ segments is  $2mn$. On the other hand, one can  consecutively cut off $m+2n$ segments of zeros, each having length $m$, from the following sequence:
$$
0^{a_1}-0^{a_2}-\cdots-0^{a_s}-\underbrace{0-0}_{b_s}
\cdots \underbrace{0-0}_{b_{k+1}} \underbrace{0-0}_{b_k} \cdots \underbrace{0-0}_{b_1}   0^{a_1}-0^{a_2}-\cdots-0^{a_s}-
$$
The number we want is the same as the number of $-0$'s in the $m+2n$ segments.
By Proposition \ref{prop-antichain-A}, the latter number is $2mn$. This finishes the proof.
\end{proof}

\section{Certain root posets}

In this section, we shall introduce a bit Lie theory, and illustrate that the several infinite families of posets in Section 1 occur as sub-families of certain root posets.

Let $\frg$ be a finite-dimensional simple Lie algebra over $\bbC$. Fix a Cartan subalgebra $\frh$ of $\frg$.
The associated root system is $\Delta=\Delta(\frg, \frh)\subseteq\frh^*$. Fix a set of positive roots $\Delta$, and let $\Pi=\{\alpha_1, \dots, \alpha_l\}$ be the corresponding simple roots.  Let $[\alpha_i]$ be the set of all positive roots $\alpha$ such that the coefficient of $\alpha_i$ in $\alpha$ is one. Note that $[\alpha_i]$
inherits a poset
structure from the usual one of $\Delta^+$: let $\alpha$
and $\beta$ be two roots of $[\alpha_i]$, then $\alpha\leq\beta$ if
and only if $\beta-\alpha$ is a nonnegative integer combination of
simple roots. We call the posets $[\alpha_i]$ by $\Delta(1)$. The latter notation reflects the fact that they come from $\bbZ$-gradings of $\frg$, see \cite{DW} and references therein for more background.

Previously, by using Ringel's paper \cite{R}, we have analyzed the structure of $\Delta(1)$ in Section 4 of \cite{DW}. More precisely, they include  three infinite families
$[m]\times [n]$, $H_n$, $[m]\times K_{n-1}$;
and the following twenty posets:
\begin{itemize}
\item[$\bullet$] $[2]\times [3] \times [3]$, $[2]\times [3] \times [4]$, $[2]\times [3] \times [5]$;
\item[$\bullet$] $[2]\times H_4$, $[3]\times H_4$, $[4]\times H_4$; $[2]\times H_5$, $[2]\times H_6$;
\item[$\bullet$] $J^2([2]\times [3])$, $[2]\times J^2([2]\times [3])$, $[3]\times J^2([2]\times [3])$; $J^3([2]\times [3])$, $[2]\times J^3([2]\times [3])$;
\item[$\bullet$] $[\alpha_4]$ in $F_4$; $[\alpha_2]$ in $E_6$; $[\alpha_1]$
and $[\alpha_2]$ in $E_7$;  $[\alpha_1]$,
$[\alpha_2]$ and $[\alpha_8]$ in $E_8$.
\end{itemize}
The above posets all come from exceptional Lie algebras.  Here we label the simple roots as Knapp \cite{K}.

Finally, let us raise two conjectures pertaining to the orbit structure of $\Delta(1)$.
Let $w_0^i$ be the longest element of the subgroup $\langle s_{\alpha_j}\mid j\neq i\rangle$ of the Weyl group. Here $s_{\alpha_j}$ is the reflection according to the simple root $\alpha_j$. Then $w_0^i$ permutes the roots in $\Delta(1)=[\alpha_i]$. Given a root $p$ of $\Delta(1)$, we denote $w_0^i(p)$ by $p^*$.
For any $p\in\Delta(1)$, and for any orbit $\mathcal{O}$,
define $M_{\mathcal{O}}(p)$ to be the number of times that $p$
occurs in the ideals of $\mathcal{O}$. That is,
\begin{equation}\label{mult-number}
M_{\mathcal{O}}(p):=\big|\{ I \in\mathcal{O} \mid p\in I\}\big|.
\end{equation}

\begin{conj}\label{conj-ideal}
For any $p\in\Delta(1)$, we
have
\begin{equation}\label{conservation-law}
M_{\mathcal{O}}(p) + M_{\mathcal{O}}(p^*)=\big|\mathcal{O}\big|.
\end{equation}
\end{conj}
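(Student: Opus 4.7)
The plan is to introduce the Lie-theoretic order-reversing involution $\sigma:=w_0^i$ on $\Delta(1)$ together with the associated Panyushev complementation $\tau$ on ideals, and to pair up ideals inside each $\Psi$-orbit via $\tau$.

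First, I would check that $\sigma$ restricts to an order-reversing involution of $\Delta(1)=[\alpha_i]$. Since $w_0^i$ lies in the Levi Weyl subgroup $\langle s_{\alpha_j}:j\neq i\rangle$, it preserves the $\bbZ$-grading of $\frg$ by height in $\alpha_i$, so $\sigma$ permutes $\Delta(1)$; as the longest element of a Weyl subgroup it has order two, hence is an involution. For the order-reversal, writing $\beta-\alpha=\sum_{j\neq i}c_j\alpha_j$ for $\alpha,\beta\in\Delta(1)$ (the $\alpha_i$-coefficient cancels) and using $w_0^i(\alpha_j)=-\alpha_{\pi(j)}$ for $j\neq i$, one computes $\sigma(\beta)-\sigma(\alpha)=-\sum_{j\neq i}c_j\alpha_{\pi(j)}$, so $\alpha\leq\beta$ forces $\sigma(\alpha)\geq\sigma(\beta)$.

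Next, define $\tau:J(\Delta(1))\to J(\Delta(1))$ by $\tau(I):=\Delta(1)\setminus\sigma(I)$. The order-reversing property of $\sigma$ makes $\tau(I)$ an ideal, and $\tau^2=\id$. A short direct calculation starting from $\Psi(I)=I(\min(\Delta(1)\setminus I))$ and using $\sigma(\min S)=\max(\sigma S)$ yields the intertwining identity $\tau\Psi\tau=\Psi^{-1}$. From the definitions one also reads off the pointwise equivalence $p\in I\iff p^*\notin\tau(I)$, since $p^*\in\tau(I)\iff\sigma(p)\notin\sigma(I)\iff p\notin I$. Granted this setup, if one further knows that $\tau$ preserves every $\Psi$-orbit $\mathcal{O}$, then $\tau|_{\mathcal{O}}$ is a bijection, and summing $\mathbf{1}_{p\in I}=1-\mathbf{1}_{p^*\in\tau(I)}$ over $I\in\mathcal{O}$ and then re-indexing via $J=\tau(I)$ yields
\[
M_{\mathcal{O}}(p)=|\mathcal{O}|-\sum_{J\in\mathcal{O}}\mathbf{1}_{p^*\in J}=|\mathcal{O}|-M_{\mathcal{O}}(p^*),
\]
which is exactly \eqref{conservation-law}.

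The hard part will be the orbit-preservation claim $\tau(\mathcal{O})=\mathcal{O}$. Thanks to the intertwining $\tau\Psi\tau=\Psi^{-1}$, it suffices to exhibit $\tau(I)\in\mathcal{O}(I)$ for one representative per orbit. For $[m]\times[n]$, I would transport the problem via $\Theta$ to binary words: a short check using Lemma \ref{lemma-Pan-binary-word} shows that $\tau$ corresponds to the reverse-complement operation $w\mapsto\overline{w}^{R}$ on $B(m,n)$, and the long-$0-$ and long-$1-$ sequences of Section 4 together with Lemma \ref{lemma-binary-word-pattern} can be used to match $\overline{w}^{R}$ with a suitable power of $\psi$ along each orbit. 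For $[m]\times K_{n-1}$ the same template applies with $\overline{\psi}$, $B_*(m,2n-1)$, and the long-$0-$ sequence of Lemma \ref{lemma-long-zero-sequence-D-II}, with extra bookkeeping for the duality $\sim$ between type I and type II orbits from Sections 5 and 6. A parallel word-level argument should settle $H_n$, and the twenty exceptional posets can be handled by a direct finite computation.
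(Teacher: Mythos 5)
First, a point of order: the paper does not prove this statement. Conjecture \ref{conj-ideal} is posed as an open problem (see also Remark \ref{rmk-homomesy}), so there is no proof in the paper to compare yours against; your proposal has to stand on its own.

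The framework you set up is correct and is the natural line of attack. That $\sigma=w_0^i$ is an order-reversing involution of $[\alpha_i]$ (it preserves the $\alpha_i$-coefficient, hence permutes $\Delta(1)$, and sends $\Delta^+_{\Pi\setminus\{\alpha_i\}}$ to its negative, which gives the order reversal), that $\tau(I)=\Delta(1)\setminus\sigma(I)=\sigma(\Delta(1)\setminus I)$ is an involution on ideals with $\tau\Psi\tau=\Psi^{-1}$, and that $p\in I\iff p^*\notin\tau(I)$ --- all of this checks out, and it correctly reduces \eqref{conservation-law} to the single claim that $\tau$ fixes every $\Psi$-orbit. The problem is that this claim \emph{is} the conjecture: everything before it is soft, and everything after it is a two-line summation. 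You acknowledge this is ``the hard part'' but then offer only a programme (``a short check shows\dots'', ``should settle\dots'', ``can be handled by a direct finite computation''), none of which is carried out. For $[m]\times[n]$, identifying $\tau$ with reverse-complementation $w\mapsto\overline{w}^{R}$ on $B(m,n)$ is plausible, but showing that $\overline{w}^{R}$ lies in the $\psi$-orbit of $w$ is genuinely nontrivial --- for an arbitrary binary word, reverse-complementation does not respect rotation-type orbit structures, so the argument must use the specific dynamics of $\psi$ (via Lemma \ref{lemma-binary-word-pattern} or the Stanley--Thomas picture), and you have not exhibited the required power of $\psi$.

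The gap is even more concrete for $[m]\times K_{n-1}$. The paper's encoding of type II orbits via $\overline{B}(m,2n)$ only records non-full-rank ideals \emph{up to the duality} $\sim$ that exchanges $I_n$ and $I_{n^\prime}$, and a type II orbit $\mathcal{O}(I)$ and its dual $\mathcal{O}(I^\prime)$ need not be the same orbit. Since $\tau$ commutes with $\sim$ up to the action of $\sigma$ on the two middle elements of $K_{n-1}$, the most your word-level argument could show is $\tau(\mathcal{O})\in\{\mathcal{O},\mathcal{O}^\prime\}$; if $\tau(\mathcal{O})=\mathcal{O}^\prime\neq\mathcal{O}$, your summation yields $M_{\mathcal{O}}(p)+M_{\mathcal{O}^\prime}(p^*)=|\mathcal{O}|$, which is not \eqref{conservation-law}. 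Ruling this out requires tracking which of the two dual ideals actually occurs in the orbit --- precisely the information the quotient $\overline{B}(m,2n)$ discards --- and this is hidden inside your phrase ``extra bookkeeping.'' Until the orbit-preservation statement is actually proved for each family (and the twenty exceptional posets are actually computed), the proposal is a reduction of the conjecture to an equivalent open statement, not a proof.
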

\medskip


Similarly, we
define $N_{\mathcal{O}}(p)$ to be the number of times that $p$
occurs in the antichains of $\mathcal{O}$. That is,
\begin{equation}\label{antichain-number}
N_{\mathcal{O}}(p):=\big|\{ \Gamma \in\mathcal{O} \mid p\in
\Gamma\}\big|.
\end{equation}

\begin{conj}\label{conj-antichain}
For any $p\in\Delta(1)$, we have
\begin{equation}\label{symm-dist}
N_{\mathcal{O}}(p) = N_{\mathcal{O}}(p^*).
\end{equation}
\end{conj}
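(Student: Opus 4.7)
The plan is to exploit the order-reversing involution $\sigma := w_{0}^{i}$ of $\Delta(1) = [\alpha_{i}]$ together with set-complementation. I will introduce the map
\begin{equation*}
\tau : J(\Delta(1)) \longrightarrow J(\Delta(1)), \qquad I \longmapsto \overline{I} := \sigma(\Delta(1) \setminus I).
\end{equation*}
Since $\Delta(1) \setminus I$ is a filter and $\sigma$ reverses the partial order, $\overline{I}$ is again an ideal, and $\sigma^{2} = \mathrm{id}$ forces $\tau^{2} = \mathrm{id}$.

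The first step is to prove the twisted equivariance $\Psi \circ \tau = \tau \circ \Psi^{-1}$, equivalently $\Psi(\overline{I}) = \overline{\Psi^{-1}(I)}$. This can be verified directly at the level of antichains: a short computation gives $\max(\overline{I}) = \sigma(\min(\Delta(1) \setminus I)) = \sigma(\max \Psi(I))$, and with this in hand both $\max\Psi(\overline{I})$ and $\max\overline{\Psi^{-1}(I)}$ simplify to $\sigma(\max I)$. Alternatively, one can write $\Psi$ as a product of toggles indexed by a top-to-bottom linear extension of $\Delta(1)$, check by cases that $t_{p} \circ \tau = \tau \circ t_{p^{*}}$, and observe that $\sigma$ converts a top-to-bottom linear extension into a bottom-to-top one, so that the conjugated product reassembles as $\Psi^{-1}$.

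The second step turns the equivariance into counting. Since $p \in \max(\overline{I})$ if and only if $p^{*} \in \max(\Psi(I))$, summing over $I \in \mathcal{O}$ and reindexing by $K = \Psi(I)$ yields
\begin{equation*}
N_{\tau(\mathcal{O})}(p) \;=\; N_{\mathcal{O}}(p^{*}),
\end{equation*}
where $\tau(\mathcal{O}) := \{\overline{I} : I \in \mathcal{O}\}$ is itself a single $\Psi$-orbit of the same size as $\mathcal{O}$ by the previous step. Consequently, Conjecture~\ref{conj-antichain} follows from the structural claim
\begin{equation*}
\tau(\mathcal{O}) = \mathcal{O} \qquad \text{for every } \Psi\text{-orbit } \mathcal{O} \text{ of } \Delta(1),
\end{equation*}
and this same claim, via the identity $p \in \overline{I}$ iff $p^{*} \notin I$, simultaneously delivers Conjecture~\ref{conj-ideal}.

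The main obstacle is this self-duality of orbits. For the three infinite families $[m]\times[n]$, $H_{n}$, and $[m]\times K_{n-1}$, my plan is to transcribe $\tau$ into an explicit operation on the binary-word encodings of Sections 3--6 (for $[m]\times[n]$ one expects the bit-complement composed with reversal of $\Theta(I)$), and then verify using the long-$0$- and long-$1$-sequences that this operation preserves each $\Psi$-orbit; concretely, the goal is to produce an integer $c(\mathcal{O})$ with $\overline{I} = \Psi^{c(\mathcal{O})}(I)$ for all $I \in \mathcal{O}$. For the twenty exceptional posets of Section 7, $\tau(\mathcal{O}) = \mathcal{O}$ can be confirmed by direct computation. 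The most appealing outcome would be a uniform argument identifying $\tau$ with a suitable power of $\Psi$ on each orbit, possibly through the cyclic sieving framework established in \cite{RS} for $[m]\times K_{n-1}$.
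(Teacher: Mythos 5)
First, note that the statement you are proving is left as an open conjecture in the paper; the authors give no proof, so there is nothing to compare your argument with, and a complete proof would go beyond the paper. Your set-up is correct and genuinely useful: $w_0^i$ is an order-reversing involution of $\Delta(1)$, the map $\tau(I)=\sigma(\Delta(1)\setminus I)$ is a well-defined involution on $J(\Delta(1))$, the twisted equivariance $\Psi\circ\tau=\tau\circ\Psi^{-1}$ holds by exactly the computation you sketch, and the identity $N_{\tau(\mathcal{O})}(p)=N_{\mathcal{O}}(p^*)$ follows. So the conjecture is indeed equivalent to the statement that $\mathcal{O}$ and $\tau(\mathcal{O})$ have the same antichain-content function for every orbit.

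The fatal problem is that your proposed route to that statement, namely the structural claim $\tau(\mathcal{O})=\mathcal{O}$ for every $\Psi$-orbit, is false. Already for $[m]\times[n]$ your own description of $\tau$ is the right one up to a slip: on binary words $\tau$ is reversal of $\Theta(I)$ (not reversal composed with bit-complement, which would not even preserve $B(m,n)$); one checks directly that $\psi(R(w))=R(\psi^{-1}(w))$ where $R$ denotes reversal. Now take the paper's Example 3.7 in $[3]\times[7]$: the orbit of the word $0011101111$ consists of the ten words listed there, and $R(0011101111)=1111011100$ is not among them (the only orbit element beginning with $1111 0$ is $1111010011$). Hence $\tau(\mathcal{O})\neq\mathcal{O}$ for that orbit, and your claimed sufficient condition fails; in particular the program of finding $c(\mathcal{O})$ with $\overline{I}=\Psi^{c(\mathcal{O})}(I)$ cannot succeed, and the same counterexample blocks your proposed derivation of Conjecture \ref{conj-ideal}. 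What actually remains to be shown after your (correct) reduction is that the two generally distinct orbits $\mathcal{O}$ and $\tau(\mathcal{O})$ carry identical statistics $N_{\mathcal{O}}(p)=N_{\tau(\mathcal{O})}(p)$ for all $p$, which is essentially the original difficulty in a new guise; your proposal offers no argument for it, and the remaining steps (the binary-word verification for the infinite families and the check of the twenty exceptional posets) are announced as plans rather than carried out. As it stands, the proposal is an interesting reformulation, not a proof.
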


\begin{rmk}\label{rmk-homomesy}
We mention that the previous two conjectures can be formulated in the language of homomesy invented by Propp and Roby \cite{PR}.
Indeed, let $\chi_{p}$ be the function on $J(\Delta(1))$ such that $\chi_{p}(I)=1$ if $p\in I$ and $\chi_{p}(I)=0$ otherwise. Then Conjecture \ref{conj-ideal} amounts to the claim that the function $\chi_{p}+\chi_{p^*}$ is $1$-mesic for any $p\in\Delta(1)$ under the reverse operator. Similarly, in the antichain setting, let $\chi^\prime_{p}$ be the function on $\mathrm{An}(\Delta(1))$ such that $\chi^\prime_{p}(A)=1$ if $p\in A$ and $\chi^\prime_{p}(A)=0$ otherwise. Then Conjecture \ref{conj-antichain} is exactly the claim that the function $\chi^\prime_{p}-\chi^\prime_{p^*}$ is 0-mesic for any $p\in \Delta(1)$ under the reverse operator.
Therefore, the two conjectures amount to guess that $\Delta(1)$ could be a good place to exhibit homomesy.
\end{rmk}

\medskip

\centerline{\scshape Acknowledgements}
We are grateful to Tom Roby and David Rush for very helpful discussions.
We thank James Propp  and  Hugh Thomas sincerely for their nice suggestions which allow us to understand homomesy better. Part of the work was carried out during Dong's visit of MIT. He thanks its Math Department sincerely for offering excellent working conditions.

\end{document}